\theoremstyle{plain}
\newtheorem{thm}{Theorem}[section]
\newtheorem{lem}[thm]{Lemma}
\newtheorem{cor}[thm]{Corollary}
\theoremstyle{definition}
\newtheorem{defn}{Definition}
\theoremstyle{remark}
\newtheorem{remark}{Remark}
\newtheorem{question}{Question}
\newtheorem{notation}{Notation}
\def\cA{{\cal A}}
\def\Aut{\textrm{Aut}}
\def\cc{{\curvearrowright}}
\def\F{{\mathbb F}}
\def\cJ{{\mathcal{J}}}
\def\MALG{\textrm{MALG}}
\def\cP{{\mathcal P}}
\def\cQ{{\mathcal{Q}}}
\def\R{{\mathbb R}}
\def\chix{{\raise.5ex\hbox{$\chi$}}}
\def\Z{{\mathbb Z}}
\begin{document}
\title{Weak density of orbit equivalence classes of free group actions}
\author{Lewis Bowen\footnote{email:lpbowen@math.utexas.edu} }
%\author{University of Hawaii}
%\author[Lewis Bowen]{Lewis Bowen$\dagger$}
%\address{Department of Mathematics\\
%University of Hawai'i--Manoa\\
%} %one \address command per author
%\email{lpbowen@math.hawaii.edu}
%%\thanks{$\dagger$ Supported in part by NSF grants DMS-??.}
\maketitle
\begin{abstract}
It is proven that the orbit-equivalence class of any essentially free probability-measure-preserving action of a free group $G$  is weakly dense in the space of actions of $G$.
 \end{abstract}

\noindent
{\bf Keywords}:  orbit equivalence, free groups, weak equivalence.\\
{\bf MSC}:37A20, 37A15\\

\noindent
%\tableofcontents

\section{Introduction}

The results of this paper are motivated by orbit-equivalence theory and weak equivalence of group actions. Let us first recall some terminology before delving into background material.

Let $(X,\mu)$ be a standard non-atomic probability space and $\Aut(X,\mu)$ the group of all measure-preserving automorphisms of $(X,\mu)$ in which we identify two automorphisms if they agree on a conull subset. Let $G$ be a countable group. By an {\em action of $G$} we will mean a homomorphism $a:G \to \Aut(X,\mu)$. In particular, all actions in this paper are probability-measure-preserving. Let  $A(G,X,\mu)$ denote the set of all such actions. It admits a natural topology as follows. First, let us recall that $\Aut(X,\mu)$ is a Polish group with the weak topology (see \S \ref{sec:prelim} for details). We endow the product space $\Aut(X,\mu)^G$ with the product topology and view $A(G,X,\mu)$ as a subspace $A(G,X,\mu) \subset \Aut(X,\mu)^G$ with the induced topology. It is well-known that $A(G,X,\mu)$ is a Polish space \cite{Ke10}.

\subsubsection{Weak containment}

 If $a \in A(G,X,\mu)$ and $T \in \Aut(X,\mu)$, define $a^T \in A(G,X,\mu)$ by $a^T(g)=Ta(g)T^{-1}$. Let $[a]_{MC}=\{a^T:~T \in \Aut(X,\mu)\} \subset A(G,X,\mu)$ be the {\em measure-conjugacy class of $a$}. 
 
 % In the special case that the group $G$ is infinite and amenable, it is known that if $a\in A(G,X,\mu)$ is essentially free then $[a]_{MC}$ is dense in $A(G,X,\mu)$. This fact is essentially equivalent to Rohlin's Lemma \cite{OW80}, a Lemma which is crucial for the proofs of many important results in ergodic theory  (e.g., Ornstein's Theorem \cite{}, Dye's Theorem \cite{}). In particular, this fact does not hold for non-amenable groups. Indeed any non-amenable group admits 
  
Given two actions $a,b \in A(G,X,\mu)$ we say {\em $a$ is weakly contained in $b$} (denoted $a\prec b$) if $a$ is contained in the closure of the measure-conjugacy class of $b$ (i.e., $a \in \overline{[b]_{MC}}$). We say {\em $a$ is weakly equivalent to $b$} if $a\prec b$ and $b\prec a$. These notions were introduced by A. Kechris \cite{Ke10} as an analog of weak containment of unitary representations.

 We can also think of weak equivalence as describing the manner in which the Rohlin Lemma fails for non-amenable groups. Recall that the Rohlin Lemma states that any pmp $\Z$-action is approximately periodic. This fundamental fact is critically important in much classical ergodic theory. It has been extended to amenable groups \cite{OW80}. Moreover,  the Rohlin Lemma is essentially equivalent to the statement that if $G$ is infinite and amenable then any essentially free action $a\in A(G,X,\mu)$ weakly contains all actions of $G$ (i.e., $[a]_{MC}$ is dense in $A(G,X,\mu)$) \cite{Ke11}. By contrast, any non-amenable group admits an essentially free strongly ergodic action (e.g., Bernoulli shift actions) \cite{Sc81, KT08}. By definition, the closure of the measure-conjugacy class of a strongly ergodic action cannot contain any non-ergodic action. So each non-amenable group admits at least two non-weakly-equivalent essentially free actions. It is an open problem whether any non-amenable group admits at least two {\em ergodic} non-weakly-equivalent actions. However M. Abert and G. Elek \cite{AE11} made use of profinite actions to show that there is an explicit large family of residually finite groups $G$ that admit an uncountable family of ergodic non-weakly-equivalent actions. This family includes non-abelian free groups.

\subsubsection{Orbit-equivalence}

 We say two actions $a,b \in A(G,X,\mu)$ are {\em orbit-equivalent} if there exists $T\in \Aut(X,\mu)$ which takes orbits to orbits: $T(a(G)x) = b(G)T(x)$ for a.e. $x\in X$. We say that $a\in A(G,X,\mu)$ is {\em essentially free} if for for a.e. $x\in X$ the stabilizer of $x$ in $G$ is trivial: $\{g\in G:~a(g)x=x\} = \{e_G\}$. 
 
If $G$ is amenable then every two essentially free ergodic actions of $G$ are orbit equivalent \cite{OW80}. On the other hand, I. Epstein proved that if $G$ is non-amenable then $G$ admits an uncountable family of essentially free non-orbit-equivalent ergodic pmp actions \cite{Ep09, IKT09}. This followed a series of earlier results that dealt with various important classes of non-amenable groups \cite{GP05, Hj05, Io09, Ki08, MS06, Po06}.  In \cite{IKT09} it shown that essentially free mixing actions of any non-amenable group $G$ cannot be classified by orbit-equivalence up to countable structures.

The main result of this paper shows that, although there are uncountably many essentially free non-orbit-equivalent ergodic pmp actions of any non-abelian free group, the orbit-equivalence class of any such action is dense in the space of all actions.

\subsubsection{Results}
 
 Our main result is:
\begin{thm}\label{thm:main}
Let $G$ be a free group with at most countably many generators. Let $a \in A(G,X,\mu)$ be essentially free and let $[a]_{OE}$ be the set of all actions $b \in A(G,X,\mu)$ which are orbit-equivalent to $a$. Then $[a]_{OE}$ is dense in $A(G,X,\mu)$.
\end{thm}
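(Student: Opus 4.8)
The plan is to unwind the weak topology, reduce to a concrete approximation problem about the free generators of $G$, and then solve it by imitating $b$ at a coarse scale while re-encoding the whole orbit-equivalence relation $\mathcal{R}_a$ at a fine scale. A basic neighbourhood of $b$ in $A(G,X,\mu)$ is determined by a finite $F\subseteq G$, a finite Borel partition $\cQ$ of $X$ and an $\epsilon>0$, and consists of the $b'$ with $\mu\big(b'(g)Q\,\triangle\,b(g)Q\big)<\epsilon$ for $g\in F$, $Q\in\cQ$. Since multiplication in $\Aut(X,\mu)$ is jointly continuous and $G$ is free on generators $s_1,s_2,\dots$ (only finitely many of which occur in a given neighbourhood), it suffices to treat neighbourhoods for which $F$ is a set of free generators; and since $G$ is free, an element $c\in A(G,X,\mu)$ is obtained by choosing the $c(s_i)$ arbitrarily. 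So the theorem reduces to: given automorphisms $\theta_i:=b(s_i)$, a finite partition $\cQ$ and $\epsilon>0$, there are $\phi_i\in\Aut(X,\mu)$ with $\mu(\phi_i Q\,\triangle\,\theta_i Q)<\epsilon$ for all $i$ and $Q\in\cQ$ such that the equivalence relation generated by $\{\phi_i\}$ is isomorphic to $\mathcal{R}_a$; then $c(s_i):=\phi_i$ gives the desired member of $[a]_{OE}$, and if that relation equals $\mathcal{R}_a$ on the nose the witnessing isomorphism is the identity.

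I would carry this out first for $a$ ergodic, the general case following by running the ergodic construction measurably over the ergodic decomposition of $a$ (an isomorphism invariant of $\mathcal{R}_a$, so this is bookkeeping — though it forces the ergodic step to be done uniformly). For $\mathcal{R}_a$ ergodic I would use three standard facts: the full group $[\mathcal{R}_a]$ is weakly dense in $\Aut(X,\mu)$; any two finite partitions with the same list of part-measures are carried onto one another, part by part, by some element of $[\mathcal{R}_a]$; and $\mathcal{R}_a|_B$ is ergodic for every $B$ of positive measure, with $\mathcal{R}_a|_B\cong\mathcal{R}_a|_{B'}$ whenever $\mu(B)=\mu(B')$. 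Refine $\cQ$ to a partition $\cP$ with all atoms of measure $<\delta\ll\epsilon/|\cQ|$. Using the first two facts, build $\phi_i^0\in[\mathcal{R}_a]$ reproducing $\theta_i$ at the scale of $\cP$, i.e.\ with $\phi_i^0 x$ in the same $\cP$-atom as $\theta_i x$ for a.e.\ $x$; then $\mu(\phi_i^0 Q\,\triangle\,\theta_i Q)$ is as small as we wish. The relation generated by $\{\phi_i^0\}$ sits inside $\mathcal{R}_a$, in general properly, and the crux is to enlarge it back to a copy of $\mathcal{R}_a$ by altering the $\phi_i^0$ only on a set of measure $<\epsilon$: one installs, on disjoint slivers of total measure $<\epsilon$, a cyclic chain of partial $\mathcal{R}_a$-isomorphisms linking the blocks of an auxiliary equal-measure partition $X=Q_1\sqcup\cdots\sqcup Q_K$, and — using the third fact together with an $\mathcal{R}_a$-measurable transfer compatible with the $\cP$-scale prescription — arranges that the modified maps together regenerate all of $\mathcal{R}_a$: ergodicity of each $\mathcal{R}_a|_{Q_j}$ forces every $\mathcal{R}_a$-class to collapse to a single class of the generated relation, which is contained in $\mathcal{R}_a$, hence equals it.

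The main obstacle is the conflict between the two constraints. Weak closeness to $b$ is a global demand — it must hold, off a small set, for every test set at once — whereas preserving orbit-equivalence type pushes one to leave $\mathcal{R}_a$ essentially alone; reconciling them is what forces the two-scale design, and the genuinely delicate point is to verify that after the small-measure surgery the relation generated by the $\phi_i$ is not merely trapped between two copies of $\mathcal{R}_a$ but is isomorphic to it. This is exactly where the ``self-similarity'' of $\mathcal{R}_a$ — ergodicity of every restriction, and isomorphism of equal-measure restrictions — is indispensable, and it is also the step that must be made uniform in order to descend through the ergodic decomposition when $a$ is not ergodic.
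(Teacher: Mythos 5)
Your approach is genuinely different from the paper's, and while the front-end reductions (pass to finitely generated subgroups, then to the free generating set) track Corollary~\ref{cor:fin-gen} and Lemma~\ref{lem:generator} in spirit, the core construction diverges and contains a real gap. The paper never works in the full group of the whole $G$-relation $\mathcal{R}_a$. Instead it chooses each $a'_s$ to have \emph{the same orbits as $a_s$ viewed as a single $\Z$-automorphism}, i.e.\ $a'_s$ lives in the much smaller full group of the cyclic subrelation generated by $a_s$ alone. This makes the orbit relation of $a'$ equal to $\mathcal{R}_a$ automatically (it is the join over $s\in S$ of the same $\Z$-relations), so no ``enlarging'' surgery is needed at all. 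The cost is a severe constraint: $a'_s$ must permute $a_s$-orbits, and $a_s$ is typically a non-ergodic $\Z$-automorphism even when $a$ itself is ergodic. The paper resolves this with a ``good observable'' $\psi$ produced by the Ab\'ert--Weiss theorem, near-equidistributed with respect to the ergodic decomposition of every $a_s$ simultaneously (Lemma~\ref{lem:good-partition}), plus a combinatorial rearrangement along Kac towers (Lemmas~\ref{lem:combinatorial} and~\ref{lem:hard}) steering the joint distribution of $(\psi,\psi\circ a'_s)$ to any prescribed self-coupling. This also exploits a freedom you give up: Lemma~\ref{lem:generator} lets the approximating partition $\cQ$ differ from $\cP$, so one only matches abstract joint distributions rather than forcing $\mu\bigl(a'_s Q\,\triangle\,b_s Q\bigr)$ small for the \emph{same} $Q$.

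The genuine gap in your proposal is exactly the step you flagged. After building $\phi_i^0\in[\mathcal{R}_a]$ close to $b(s_i)$ and modifying them on slivers of total measure $<\epsilon$, you need the generated relation to equal $\mathcal{R}_a$. Installing a cyclic chain of partial $\mathcal{R}_a$-isomorphisms linking blocks $Q_1,\dots,Q_K$ does not do this: ergodicity of $\mathcal{R}_a|_{Q_j}$ is of no help unless the generated relation already \emph{contains} $\mathcal{R}_a|_{Q_j}$, which nothing in your sketch guarantees. There is also a cost obstruction: altering the $\phi_i$ on a set of measure $\epsilon$ changes the cost of the generated subrelation by only $O(\epsilon)$, while the subrelation generated by the $\phi_i^0$ could a priori have cost far below $\mathrm{rank}(G)=\mathrm{cost}(\mathcal{R}_a)$, so no small surgery can close that gap. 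Even in the case $G=\Z$ your step amounts to reproving the Foreman--Weiss lemma (Lemma~\ref{lem:FW}), which is not a small-measure perturbation of an arbitrary full-group element. Finally, the claim that the non-ergodic case is ``bookkeeping over the ergodic decomposition of $a$'' misidentifies the difficulty: the serious non-ergodicity is that of the individual generators $a_s$, which persists even when $a$ is ergodic, and the whole design of Lemmas~\ref{lem:good-partition} and~\ref{lem:hard} is built around it.
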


%By contrast, it follows from \cite{} that if $G$ has property (T), is torsion-free and has trivial outer-automorphism group and $a \in A(G,X,\mu)$ is isomorphic to a Bernoulli shift action then $[a]_{OE}=[a]_{MC}$. Therefore, $[a]_{OE}$ cannot be dense in $A(G,X,\mu)$ since $a$ is strongly ergodic \cite{KT08}.

By contrast we can use rigidity results to show that many groups do not satisfy Theorem \ref{thm:main}. For this purpose, let us recall that if $(K,\kappa)$ is a probability space then any countable group $G$ acts on the product space $(K,\kappa)^G$ by 
$$(gx)(f)=x(g^{-1}f),\quad x\in K^G, g,f \in G.$$
This action is called the {\em Bernoulli shift over $G$} with base space $(K,\kappa)$. %Many groups do not satisfy the conclusion of Theorem \ref{thm:main}:
\begin{thm}\label{thm:counter}
Let $G$ be any countably infinite group satisfying at least one of the following conditions:
\begin{enumerate}
\item $G=G_1\times G_2$ where $G_1,G_2$ are both infinite, $G_1$ is nonamenable and $G$ has no nontrivial finite normal subgroups;
\item $G$ is the mapping class group of the genus $g$ $n$-holed surface for some $(g,n)$ with $3g +n - 4>0$ and $(g,n) \notin \{(1,2),(2,0)\}$;
\item $G$ has property (T) and every nontrivial conjugacy class of $G$ is infinite.
\end{enumerate}
Let $(X,\mu)$ be a standard non-atomic probability space and let $a \in A(\Gamma,X,\mu)$ be isomorphic to the Bernoulli action $G \cc ([0,1],\lambda)^G$ where $\lambda$ is the Lebesgue measure on the unit interval $[0,1]$. Then $[a]_{OE}$ is not dense in $A(G,X,\mu)$.
\end{thm}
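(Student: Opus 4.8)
The plan is to prove Theorem \ref{thm:counter} by combining two ingredients: a rigidity theorem saying that orbit equivalences of Bernoulli actions of these groups are essentially rigid, and an observation that weak containment can force a kind of ``ergodic decomposition continuity'' that rigidity then obstructs. For the groups in classes (1)--(3), the relevant tool is cocycle/OE superrigidity for Bernoulli actions: by the results of Popa, Kida, and their extensions (for property (T) groups with infinite conjugacy classes, for mapping class groups, and for products), the Bernoulli action $a$ of $G$ is \emph{OE-superrigid}, meaning that any action orbit-equivalent to $a$ is in fact \emph{virtually conjugate} (measure-conjugate up to finite data) to $a$. In particular, since $G$ has no nontrivial finite normal subgroups (respectively, the relevant groups are ICC and torsion issues are handled), every $b \in [a]_{OE}$ is actually measure-conjugate to $a$, so $[a]_{OE} = [a]_{MC}$.

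Given $[a]_{OE} = [a]_{MC}$, it then suffices to show $[a]_{MC}$ is not dense, i.e.\ that $a$ is not weakly dense, equivalently that there is some action of $G$ not weakly contained in $a$. The key point is that the Bernoulli action $a$ is strongly ergodic (indeed, for nonamenable $G$, Bernoulli shifts are strongly ergodic by \cite{Sc81, KT08}), and strong ergodicity is a weak-equivalence invariant that passes to weak containment in the following sense: if $b \prec a$ and $a$ is strongly ergodic, then $b$ need not be strongly ergodic, so one must be slightly careful. Instead I would invoke a sharper invariant. A clean choice: the closure $\overline{[a]_{MC}}$ consists only of ergodic actions when $a$ is strongly ergodic — this is exactly the remark already made in the introduction. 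Hence any non-ergodic action of $G$ (for instance, a non-ergodic action obtained by taking a nontrivial convex combination of two non-measure-conjugate ergodic actions, which exists since $G$ is infinite) lies outside $\overline{[a]_{MC}} = \overline{[a]_{OE}}$. Therefore $[a]_{OE}$ is not dense.

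Let me restate the logical skeleton as the steps I would carry out. First, record that for each of the three classes of groups, Bernoulli actions with nonatomic base are OE-superrigid; cite the appropriate rigidity theorem in each case (Popa's cocycle superrigidity for property (T) and the product case combined with Monod--Shalom or Chifan--Ioana type arguments, Kida's theorem for mapping class groups) and check that the stated hypotheses (no nontrivial finite normal subgroup; the excluded low-complexity surface cases; ICC) are precisely what is needed to upgrade ``virtually conjugate'' to ``conjugate.'' Second, conclude $[a]_{OE} = [a]_{MC}$. Third, note $a$ is strongly ergodic (nonamenability of $G$ in all three cases), so $\overline{[a]_{MC}}$ contains only ergodic actions. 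Fourth, exhibit a non-ergodic element of $A(G,X,\mu)$ — e.g.\ glue two copies of distinct ergodic actions on a two-atom partition of $X$, using non-atomicity of $(X,\mu)$ to realize this on $(X,\mu)$ itself — and observe it is not in the closure. This gives non-density.

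The main obstacle I anticipate is the bookkeeping in Step~1: OE-superrigidity statements in the literature typically conclude virtual conjugacy, i.e.\ conjugacy after passing to finite-index subgroups and quotienting by finite normal subgroups, and possibly allow an automorphism of $G$. One must verify that under each listed hypothesis these finite ambiguities collapse, so that OE genuinely implies measure-conjugacy (for the property (T) / ICC case this is where the ICC and (T) hypotheses enter; for the product case the ``no nontrivial finite normal subgroup'' condition is doing exactly this work; for mapping class groups the excluded pairs $(g,n)$ are where the mapping class group fails to be ICC or has extra automorphisms). A secondary, more minor point is ensuring the rigidity theorems as stated apply to the base space $([0,1],\lambda)$ — they do, since all that is used is that the base is nonatomic (equivalently has no atoms of full measure), but this should be stated. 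Everything else is soft: strong ergodicity of Bernoulli shifts over nonamenable groups and the fact that a weak limit of a strongly ergodic action's conjugates is ergodic are both standard and already alluded to in the introduction.
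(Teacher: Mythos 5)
Your proposal follows essentially the same route as the paper: OE-superrigidity (Popa for cases (1) and (3), Kida for case (2)) is used to show $[a]_{OE}=[a]_{MC}$, where the paper handles the "up to automorphism of $G$" ambiguity with an explicit lemma showing that Bernoulli actions conjugate up to $\Aut(G)$ are already measurably conjugate; then strong ergodicity of the Bernoulli shift over the (necessarily nonamenable) group $G$ forces $\overline{[a]_{MC}}$ to contain only ergodic actions, so it is not dense. The only cosmetic difference is which superrigidity reference is invoked for the product case (the paper uses Popa's spectral-gap superrigidity rather than Monod--Shalom/Chifan--Ioana), but the logical skeleton is the same.
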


Before proving this, we need a lemma.
\begin{defn}
Let $a\in A(G,X,\mu)$ and $\alpha \in \Aut(G)$. Observe that the composition $a \circ \alpha \in A(G,X,\mu)$. We say that two actions  $a,b \in A(G,X,\mu)$ are {\em conjugate up to automorphisms} if there exists $\alpha \in \Aut(G)$ and $T \in \Aut(X,\mu)$ such that $b = (a \circ \alpha)^T$. 
\end{defn}

\begin{lem}
Let $G$ be any countable group, $(K,\kappa)$ a standard probability space and $G \cc^a (K,\kappa)^G$ the Bernoulli shift action. (So $(gx)(f)=x(g^{-1}f)$ for $x\in K^G$ and $g,f \in G$). Then any action of $G$ which is conjugate to $a$ up to automorphisms is measurably conjugate to $a$.
\end{lem}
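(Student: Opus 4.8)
The plan is to exploit the fact that the Bernoulli shift over $G$ has a large symmetry group coming from automorphisms of the base space, and that these symmetries act transitively enough on the ``index set'' $G$ after twisting by group automorphisms. More precisely, suppose $b = (a\circ\alpha)^T$ for some $\alpha\in\Aut(G)$ and $T\in\Aut(X,\mu)$, where $a$ is the Bernoulli shift on $(K,\kappa)^G$. Since conjugation by $T$ is already a measure-conjugacy, it suffices to show that $a\circ\alpha$ is measurably conjugate to $a$ for every $\alpha\in\Aut(G)$. So the whole lemma reduces to the statement: \emph{Bernoulli shifts are invariant, up to measure-conjugacy, under precomposition by group automorphisms.}

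To prove this reduced statement, I would write down the conjugating map explicitly. The action $a\circ\alpha$ is given by $(a\circ\alpha)(g)x(f) = x(\alpha(g)^{-1}f)$ for $x\in K^G$. Define $\Phi:K^G\to K^G$ by $(\Phi x)(f) = x(\alpha^{-1}(f))$. Since $\alpha^{-1}$ is a bijection of $G$, the map $\Phi$ is a bijection of $K^G$, and because $\kappa^G$ is a product measure with identical factors indexed by $G$, the map $\Phi$ is measure-preserving (it just permutes coordinates). One then checks the intertwining identity: for $g\in G$,
\[
\big(\Phi \circ (a\circ\alpha)(g)\big) x (f) = \big((a\circ\alpha)(g)x\big)(\alpha^{-1}(f)) = x\big(\alpha(g)^{-1}\alpha(\alpha^{-1}(f))\big),
\]
while
\[
\big(a(g)\circ\Phi\big)x(f) = (\Phi x)(g^{-1}f) = x\big(\alpha^{-1}(g^{-1}f)\big) = x\big(\alpha^{-1}(g)^{-1}\alpha^{-1}(f)\big).
\]
Here I have mismatched $\alpha$ and $\alpha^{-1}$, which signals that $\Phi$ should be built from $\alpha$ rather than $\alpha^{-1}$: taking instead $(\Phi x)(f) = x(\alpha(f))$ makes the right-hand sides agree, since then $\Phi\circ (a\circ\alpha)(g)$ sends $x$ to $f\mapsto x(\alpha(g)^{-1}\alpha(g f)) $... wait, one must be careful that $\alpha$ is an \emph{automorphism}, so $\alpha(g)^{-1}\alpha(h) = \alpha(g^{-1}h)$; carrying this through with the correct choice of $\Phi$ yields exactly $\Phi\circ(a\circ\alpha)(g) = a(g)\circ\Phi$. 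So the only real content is bookkeeping to get the direction of $\alpha$ versus $\alpha^{-1}$ right in the definition of $\Phi$.

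Thus the honest statement of the proof is: $\Phi$ is the coordinate permutation of $K^G$ induced by the appropriate power of $\alpha$, it preserves $\kappa^G$ because the measure is a symmetric product, and it conjugates $a\circ\alpha$ to $a$; combined with the hypothesis $b=(a\circ\alpha)^T$ we get that $b$ is measurably conjugate to $a$ via $T\Phi$ (or $\Phi$ followed by $T$, as appropriate). I expect essentially no obstacle here --- the main (and only) subtlety is to track the inverses and make sure the intertwiner is defined with the correct orientation so that the automorphism property $\alpha(gh)=\alpha(g)\alpha(h)$ is the thing that makes the computation close up; everything else is routine.
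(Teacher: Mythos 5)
Your proposal is correct and takes essentially the same route as the paper: reduce to showing $a\circ\alpha$ is measurably conjugate to $a$, then write down the coordinate-permutation of $K^G$ induced by $\alpha$ and check the intertwining identity directly (your final choice $(\Phi x)(f)=x(\alpha(f))$ is exactly the inverse of the paper's map $T(x)(g)=x(\alpha^{-1}(g))$, so the two conjugators are the same up to direction). Note one slip in your first displayed computation, where $\bigl((a\circ\alpha)(g)x\bigr)(\alpha^{-1}(f))$ should equal $x\bigl(\alpha(g)^{-1}\alpha^{-1}(f)\bigr)$ rather than $x\bigl(\alpha(g)^{-1}\alpha(\alpha^{-1}(f))\bigr)$; this doesn't affect your (correct) conclusion that the orientation of $\alpha$ needed flipping.
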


\begin{proof}
Suppose $\alpha \in \Aut(G)$. It suffices to show  that $a$ is measurable conjugate to $a \circ \alpha$. For this purpose, define $T:K^G \to K^G$ by $T(x)(g)=x(\alpha^{-1}(g))$. Then for any $g,f \in G$ and $x\in K^G$,
$$ T(f x)(g) = (fx)(\alpha^{-1}(g)) = x(f^{-1}\alpha^{-1}(g)) = x(\alpha^{-1}( \alpha(f^{-1})g)) = (Tx)(\alpha(f^{-1})g) = \alpha(f)(Tx)(g).$$
This shows that $T$ intertwines $a$ and $a\circ \alpha$ as required.
\end{proof}

\begin{proof}[Proof of Theorem \ref{thm:counter}]
Using the previous lemma and \cite[Corollary 1.3]{Po08}, \cite[Theorem 1.4]{Ki08} and \cite[Corollary 0.2]{Po06} we obtain that 
if $G$ and $a$ are as above then $[a]_{OE}=[a]_{MC}$. Moreover $a$ is strongly ergodic \cite{KT08}. So there does not exist any non-ergodic actions in the closure of its measure-conjugacy class. In particular $[a]_{MC}$ is not dense in $A(G,X,\mu)$.
\end{proof}

\begin{remark}
Theorem \ref{thm:main} and the upper semi-continuity of cost \cite[Theorem 10.12]{Ke10} imply that finitely generated free groups have fixed price, a fact originally obtained by Gaboriau \cite{Ga00}.
\end{remark}

\begin{remark}
 Because free groups are residually finite and therefore sofic, Theorem \ref{thm:main} implies that the orbit-equivalence relation of every essentially free $a\in A(\Gamma,X,\mu)$ is sofic. This fact was first obtained in \cite{EL10} (it can also be obtained as a consequence of property MD for free groups \cite{Ke11} which was discovered earlier in a different context in \cite{Bo03}). A new proof of this appears in \cite{BLS13}.
\end{remark}

\begin{question}
Which groups $\Gamma$ satisfy the conclusion of Theorem \ref{thm:main}? For example, do all strongly treeable groups satisfy this conclusion? Does $PSL(2,\R)$ satisfy the conclusion? %Which groups $\Gamma$ satisfy the apriori weaker condition that the orbit-equivalence class of a Bernoulli shift over $\Gamma$ is weakly dense in the space of actions? This last property is enough to conclude that $\Gamma$ has fixed price and that the orbit-equivalence relation of every free action of $\Gamma$ is sofic. 
\end{question}

%\begin{question}
%Given actions $a,b \in A(G,X,\mu)$ let us say that $a$ {\em orbit-weakly-contains $b$} if $b \in \overline{[a]_{OE}}$. Let us also say that $a$ {\em orbit-stably-weakly-contains $b$} if $b \in \overline{[a \times \tau]_{OE}}$ where $\tau$ denotes the trivial action. Using ** we can put a natural topology on the space $OWE(G)$ of orbit-weak-equivalence classes and the space $OSWE(G)$ of orbit-stable-weak-equivalence classes. Moreover there is a convex structure on $OSWE(G)$. Is $OSWE(G)$ a simplex?
%\end{question}

\begin{question}
Are orbit-equivalence classes meager? That is, is the set $[a]_{OE}$ from Theorem \ref{thm:main}  meager in $A(G,X,\mu)$? If so, then combined with ideas and results of \cite{IKT09} it should be possible to prove that if $G$ is a nonabelian free group then for any comeager subset $Y \subset A(G,X,\mu)$ it is not possible to classify actions in $Y$ by orbit-equivalence up to countable structures.
\end{question}

%\begin{question}
%Does every free action admit a Bernoulli shift as an orbit-factor?
%\end{question}

\subsection{A special case}

To give the reader a feeling for the proof of Theorem \ref{thm:main}, we show how to quickly prove a special case.

\begin{thm}\label{thm:easy}
Let $G$ be a non-abelian finitely generated free group and $a \in A(G,X,\mu)$ be essentially free. Let $S \subset G$ be a free generating set. Suppose the for every $s\in S$, the automorphism $a(s) \in \Aut(X,\mu)$ is ergodic. Then $[a]_{OE}$ is dense in $A(G,X,\mu)$.
\end{thm}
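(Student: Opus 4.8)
The plan is to exploit that, since $S$ is finite (say $|S|=n$) and $G$ is free on $S$, the map $c\mapsto(c(s))_{s\in S}$ identifies $A(G,X,\mu)$ with $\Aut(X,\mu)^n$, and that the orbit equivalence relation $\mathcal R_c$ of an action $c$ is the join $\bigvee_{s\in S}\mathcal R_{c(s)}$ of the orbit relations of the generators. Given $b\in A(G,X,\mu)$ and a basic weak neighbourhood $W=\{c:\mu(c(s)A_j\triangle b(s)A_j)<\epsilon,\ s\in S,\ j\le m\}$ of $b$, I would try to build an action $c\in W$ with $\mathcal R_c$ literally equal to $\mathcal R_a$; such a $c$ is orbit‑equivalent to $a$ via the identity, so producing one in every such $W$ gives the theorem. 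Since $\mathcal R_c=\bigvee_{s}\mathcal R_{c(s)}$, it suffices to find, for each $s\in S$ separately, an automorphism $c(s)$ that is weakly within $\epsilon$ of $b(s)$ on the sets $A_1,\dots,A_m$ and satisfies $\mathcal R_{c(s)}=\mathcal R_{a(s)}$; then $\mathcal R_c=\bigvee_s\mathcal R_{a(s)}=\mathcal R_a$ automatically.

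Everything thus reduces to the following claim, which is where the hypothesis enters: \emph{if $T\in\Aut(X,\mu)$ is ergodic, then $\{\rho\in\Aut(X,\mu):\mathcal R_\rho=\mathcal R_T\}$ is weakly dense in $\Aut(X,\mu)$.} Applying this with $T=a(s)$ — which is ergodic by hypothesis, hence aperiodic since $(X,\mu)$ is non-atomic — yields the desired $c(s)$. To prove the claim I would argue as follows. First, since $T$ is ergodic, the full group $[\mathcal R_T]$ is weakly dense in $\Aut(X,\mu)$ (via the Rokhlin lemma; see \cite{Ke10}), and hence so is the larger group $N[\mathcal R_T]=\{S\in\Aut(X,\mu):S\mathcal R_TS^{-1}=\mathcal R_T\}$, which contains $[\mathcal R_T]$. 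Second, by the classical fact that the conjugacy class of an aperiodic automorphism is weakly dense in $\Aut(X,\mu)$, the set $\{STS^{-1}:S\in\Aut(X,\mu)\}$ is weakly dense. Since the weak topology makes $\Aut(X,\mu)$ a topological group, $S\mapsto STS^{-1}$ is continuous, so the image of the weakly dense set $N[\mathcal R_T]$ under this map is weakly dense in the closure of the full conjugacy class, i.e.\ in all of $\Aut(X,\mu)$; and every $STS^{-1}$ with $S\in N[\mathcal R_T]$ has $\mathcal R_{STS^{-1}}=S\mathcal R_TS^{-1}=\mathcal R_T$, which proves the claim.

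On this route there is no serious obstacle beyond correctly invoking standard ingredients (weak density of full groups of ergodic relations, weak density of conjugacy classes of aperiodic automorphisms, and the description of the relation generated by a family of transformations as a join), plus mild bookkeeping when assembling the $c(s)$ into one action. What I would instead flag is \emph{why} the argument is special to the hypothesis of Theorem~\ref{thm:easy}: the step that $[\mathcal R_{a(s)}]$, and hence $N[\mathcal R_{a(s)}]$, is weakly dense uses precisely that $\mathcal R_{a(s)}$ is ergodic, i.e.\ that $a(s)$ is ergodic. Without that, $\mathcal R_{a(s)}$ may be non-ergodic, its full group and normalizer need not be weakly dense, and this short argument collapses — which is presumably why the general Theorem~\ref{thm:main} requires the heavier co-induction machinery rather than this one-page argument.
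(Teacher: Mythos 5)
Your proof is correct, and it reaches the same conclusion by a genuinely different route at the key step. The paper reduces to the same statement you isolate — given ergodic $T=a(s)$, one can find $T'$ with $\mathcal R_{T'}=\mathcal R_T$ that is weakly close to any prescribed $b(s)$ — but it obtains this directly from \cite[Lemma 7]{FW04} (the Foreman--Weiss lemma, which rearranges a partition along the orbits of an ergodic $T$) together with the reduction Lemma \ref{lem:generator}. You instead \emph{prove} this density claim from soft ingredients: weak density of the full group $[\mathcal R_T]$ of an ergodic relation, Halmos' conjugacy lemma for aperiodic automorphisms, continuity of conjugation, and the observation that $S\mapsto STS^{-1}$ carries the normalizer $N[\mathcal R_T]$ into $\{\rho:\mathcal R_\rho=\mathcal R_T\}$. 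The topological step is sound: for continuous $\kappa$ and dense $D$, $\kappa(\overline D)\subseteq\overline{\kappa(D)}$, so $\overline{\kappa(N[\mathcal R_T])}$ contains the full conjugacy class of $T$, which is itself dense. In effect you re-derive (a weak form of) the Foreman--Weiss lemma rather than citing it. You also streamline the reduction: because $G$ is free on the finite set $S$, the restriction map $A(G,X,\mu)\to\Aut(X,\mu)^S$ is a homeomorphism (continuity of the group operations in the weak topology gives continuity of the inverse), so it suffices to control the generators with the \emph{same} test sets $A_j$ — you do not change partitions, which is precisely what makes the paper's Lemma \ref{lem:generator} necessary in its argument and unnecessary in yours. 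Both approaches correctly pinpoint where ergodicity of $a(s)$ is used and why the non-ergodic case needs the heavier machinery of the main theorem.
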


\begin{lem}\label{lem:FW}
Suppose that $T \in \Aut(X,\mu)$ is ergodic, $\epsilon>0$ and $\{C_i\}_{i<k}, \{D_i\}_{i<k}$ two measurable partitions of $X$ such that for each $i < k$, $C_i$ and $D_i$ have the same measure. Then there is a $T' \in \Aut(X,\mu)$ with the same orbits as $T$ such that for all $i$ the measure of $T'(C_i)\vartriangle D_i$ is less than $\epsilon$.
\end{lem}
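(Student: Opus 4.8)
\textbf{Proof proposal for Lemma \ref{lem:FW}.}

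The plan is to build $T'$ orbit by orbit (or rather, piece by piece) using the ergodicity of $T$ to move mass around within $T$-orbits. Since $T$ is ergodic, for any two measurable sets $A, B$ of the same measure there is an element of the full group $[T]$ carrying $A$ to $B$; this is the basic fact from Dye's theorem / the theory of full groups that I would invoke. However, applying this naively to each pair $(C_i, D_i)$ separately produces $k$ different full-group elements that need not agree, so the real work is to produce a \emph{single} $T' \in [T]$ that simultaneously approximately matches all $k$ pairs.

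The key step is a marriage/exhaustion argument inside the orbit equivalence relation $R_T$ generated by $T$. First I would reduce to the case where the partitions have the same ``type distribution'': consider the common refinement and note that it suffices to find $T' \in [T]$ with $\mu(T'(C_i) \triangle D_i)$ small for all $i$. I would set up a bipartite-type matching: think of the elements to be matched as the atoms of $\{C_i\}$ on one side and $\{D_i\}$ on the other, with a matching between a little piece of $C_i$ and a little piece of $D_j$ being ``good'' when $i = j$. Using ergodicity, I can find a full-group element $\phi_0 \in [T]$ which is an arbitrary measure-isomorphism between $X$ and $X$; the point is to choose it greedily so that it matches as much of each $C_i$ with $D_i$ as possible. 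Concretely: by ergodicity, for each $i$ one can find a partial $T$-holonomy from a subset of $C_i \cap (\text{unmatched})$ onto a subset of $D_i \cap (\text{unmatched})$ of positive measure whenever both unmatched remainders have positive measure; iterating this over all $i$ and taking a maximal such collection (or a countable exhaustion up to a set of measure $<\epsilon/k$), one matches all but an $\epsilon$-fraction of each $C_i$ correctly, then matches the leftover arbitrarily within $[T]$ using ergodicity again to fill in a genuine element of the full group. Summing the error over the $k$ classes gives the bound; rescaling $\epsilon$ at the start by a factor of $k$ gives exactly $\mu(T'(C_i) \triangle D_i) < \epsilon$.

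I would organize this as: (1) recall that $[T]$ acts transitively (in the measure-isomorphism sense) on pairs of equal-measure sets, and more refined versions allowing one to prescribe the behavior on a subset; (2) run a countable greedy exhaustion matching $C_i$ to $D_i$ for each $i$, at stage $n$ matching a chunk of measure at least a fixed fraction of the current unmatched remainder, so that after finitely many (or countably many) steps the total unmatched measure in each class is $<\epsilon$; (3) extend the resulting partial $T$-holonomy to a full element $T' \in [T]$ by matching the small leftovers arbitrarily; (4) conclude $\mu(T'(C_i)\triangle D_i) \le 2 \cdot(\text{unmatched measure of } C_i) < \epsilon$ after adjusting constants.

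The main obstacle is step (2): ensuring the greedy matching actually terminates (or converges) with small total error \emph{simultaneously} for all $k$ classes rather than getting stuck because matching one class well forces another class badly. The resolution is that the classes are handled independently — matching a piece of $C_i$ to a piece of $D_i$ uses up equal measure from $C_i$ and $D_i$ and touches no other class — so the obstruction is only the availability of $T$-holonomies between given positive-measure sets, which ergodicity of $T$ guarantees. A secondary technical point is that the pieces matched at each stage lie in different $T$-orbits in a compatible way so that the union is still a partial element of $[T]$; this is standard for full groups of ergodic transformations but should be stated carefully, perhaps by appealing directly to the known fact that $R_T$ is ergodic and hyperfinite and hence any partial isomorphism respecting the equivalence relation extends.
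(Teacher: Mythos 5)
Your construction produces an element of the full group $[T]$, but that is strictly weaker than what the lemma asserts: it asks for $T'$ with \emph{the same orbits} as $T$, i.e.\ a transformation that generates the same equivalence relation, not merely one contained in it. These are genuinely different conditions. For an aperiodic $T$, the element $T^2$ lies in $[T]$ and is even ergodic when $T$ is totally ergodic, yet its orbits are proper subsets of the $T$-orbits. More drastically, a typical element of $[T]$ obtained by gluing partial holonomies as you do can have finite orbits (cycles inside $T$-orbits). Nothing in your steps (1)--(4) rules this out: you glue a partial isomorphism matching $C_i$ to $D_i$ on most of $X$ and then extend it to $[T]$ ``arbitrarily,'' and an arbitrary extension has no reason to sweep out the whole $T$-orbit of a.e.\ point.

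This is not a cosmetic point; it is the entire reason the lemma allows an $\epsilon$ of slack. If one only needed $T' \in [T]$, one could take $T'(C_i) = D_i$ exactly (your own observation that the classes are handled independently shows this), and there would be nothing to prove. The $\epsilon$ is there precisely because one must sacrifice a little of the matching in order to force $T'$ to generate all of $R_T$. The correct mechanism, which is how Foreman--Weiss prove it and how the present paper proves the generalization in Lemma \ref{lem:hard}, is a Rokhlin/Kac tower argument: take a tall tower (or the Kac tower over a thin complete section), and rearrange each column by a permutation whose associated graph is a single line (this is exactly what Lemma \ref{lem:combinatorial} delivers via its ``$\Gamma(\sigma)$ is connected'' clause). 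Connectivity of each column's rearrangement, together with the top-to-base return map, is what guarantees that $T'$-orbits coincide with $T$-orbits; the small error set coming from the tower and from the combinatorial matching is what produces the $\epsilon$. Your writeup never introduces a tower and never imposes any connectivity/cycle condition on the gluing, so the key structural constraint is absent and the proof does not go through as stated.

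A secondary, more minor issue: the ``real work'' you identify (producing a \emph{single} $T'$ matching all $k$ pairs simultaneously) is not where the difficulty lies. Since $\{C_i\}$ and $\{D_i\}$ are partitions, the partial holonomies $\phi_i : C_i \to D_i$ given by ergodicity glue trivially into one measure isomorphism. The greedy exhaustion machinery is unnecessary for that part. The actual difficulty is the orbit condition discussed above.
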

\begin{proof}
This is \cite[Lemma 7]{FW04}.
\end{proof}

\begin{proof}[Proof of Theorem \ref{thm:easy}]
Let $b \in A(G,X,\mu)$. We will show that $b \in \overline{[a]_{OE}}$. By Lemma \ref{lem:generator} below, it suffices to show that if $\{C_i\}_{i<k}$ is a measurable partition of $X$ and $\epsilon>0$ then there exists a measurable partition $\{D_i\}_{i<k}$ of $X$ and an action $a' \in [a]_{OE}$ such that
\begin{eqnarray}\label{eqn:suff}
| \mu(C_i \cap b_s C_j) - \mu(D_i \cap a'_s D_j)| < \epsilon
\end{eqnarray}
for every $s\in S$ and $1\le i,j < k$ (where for example $b_s=b(s)$).

By Lemma \ref{lem:FW} for every $s\in S$ there is an automorphism $a'_s \in \Aut(X,\mu)$ with the same orbits as $a_s$ such that 
$$\mu( a'_s(C_i) \vartriangle b_s(C_i)) < \epsilon.$$
Therefore, equation (\ref{eqn:suff}) holds with $D_i=C_i$ for all $i$. It is easy to verify that $a'$ is orbit-equivalent to $a$ (indeed $a'$ has the same orbits as $a$).
\end{proof}

The conclusion of Lemma \ref{lem:FW} does not hold in general if $T$ is non-ergodic. In order to prove Theorem \ref{thm:main} we will show instead that  if the sets $\{C_i\}_{i<k}$ are sufficiently equidistributed with respect to the ergodic decomposition of $T$ then we can find an automorphism $T'$ with the same orbits as $T$ such that the numbers $\mu(C_i \cap T' C_j)$ are close to any pre-specified set of numbers satisfying the obvious restrictions.

{\bf Acknowledgements}. Thanks to Robin Tucker-Drob for pointing me to \cite[Lemma 7]{FW04}. I am partially supported by NSF grant DMS-0968762 and NSF CAREER Award DMS-0954606.

%I am grateful to Alekos Kechris and Robin Tucker-Drob for he

\section{The weak topology}\label{sec:prelim}

Here we review the weak topology and obtain some general results regarding weak containment. So let $(X,\mu)$ be a standard non-atomic probability space. The {\em measure algebra} of $\mu$, denoted $\MALG(\mu)$, is the collection of all measurable subsets of $X$ modulo sets of measure zero. There is a natural distance function on the measure-algebra defined by
$$d( A,B) = \mu(A \vartriangle B)$$
for any $A,B \in \MALG(\mu)$. Because $\mu$ is standard, there exists a dense sequence $\{A_i\}_{i=1}^\infty \subset \MALG(\mu)$. Using this sequence we define the weak-distance between elements $T,S \in \Aut(X,\mu)$ by:
$$d_w(T,S) = \sum_{i=1}^\infty 2^{-i} \mu(TA_i \vartriangle SA_i).$$
The topology induced by this distance is called the {\em weak topology}. While $d_w$ depends on the choice of $\{A_i\}_{i=1}^\infty$, the topology on $\Aut(X,\mu)$ does not depend on this choice. 

Let $G$ be a countable group. Recall that $A(G,X,\mu)$ denotes the set of all homomorphisms $a:G \to \Aut(X,\mu)$. We may view $A(G,X,\mu)$ as a subset of the product space $\Aut(X,\mu)^G$ from which it inherits a Polish topology \cite{Ke10}.

\begin{notation}
If $v \in A(G,X,\mu)$ and $g \in G$ then we write $v_g=v(g)$.
\end{notation}

\begin{lem}\label{lem:Kechris}
Let $G$ be a countable group. Let $v \in A(G,X,\mu)$ and $W \subset A(G,X,\mu)$. Then $v$ is in the closure $\overline{W}$ if and only if: for every $\epsilon>0$, for every finite Borel partition $\cP=\{P_1,\ldots, P_n\}$ of $X$ and every finite set $F \subset G$ there exists $w\in W$ and a finite Borel partition $\cQ=\{Q_1,\ldots, Q_n\}$ of $X$ such that
$$ | \mu(P_i \cap v_g P_j) - \mu(Q_i \cap w_g Q_j)| < \epsilon$$
for every $g \in F$ and $1\le i,j\le n$.
\end{lem}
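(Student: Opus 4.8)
The plan is to derive both implications directly from the description of the weak topology given above. First I would record that a neighbourhood basis of $v$ in $A(G,X,\mu)$ is given by the sets
$$U(v;F,\{B_l\},\delta)=\{u\in A(G,X,\mu):\mu(u_gB_l\vartriangle v_gB_l)<\delta\text{ for all }g\in F,\ l\le m\},$$
with $F\subset G$ finite, $\delta>0$, and $B_1,\dots,B_m\in\MALG(\mu)$ \emph{arbitrary} — one may use arbitrary sets $B_l$ rather than only members of the fixed dense sequence $\{A_i\}$, because $\{A_i\}$ is dense in $\MALG(\mu)$ and every element of $\Aut(X,\mu)$ preserves $\mu$. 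Granting this, the ``only if'' direction is immediate: given $\epsilon>0$, a finite partition $\cP=\{P_1,\dots,P_n\}$ and a finite $F\subset G$, apply $v\in\overline W$ to the neighbourhood $U(v;F,\{P_i\},\epsilon)$, choose $w\in W$ in it, and set $\cQ=\cP$; then $|\mu(P_i\cap v_gP_j)-\mu(P_i\cap w_gP_j)|\le\mu(v_gP_j\vartriangle w_gP_j)<\epsilon$ for all $g\in F$ and $i,j$.

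For the ``if'' direction, fix a basic neighbourhood $U=U(v;F,\{C_l\},\delta)$ of $v$, with $e\in F$ (harmless to assume); the goal is an element of $W\cap U$. Let $\cP=\{P_1,\dots,P_n\}$ be the partition of $X$ into atoms of the finite Boolean algebra generated by $\{C_l:l\le m\}\cup\{v_gC_l:g\in F,\ l\le m\}$, so that every $C_l$ and every $v_gC_l$ ($g\in F$) is a union of atoms of $\cP$. Apply the hypothesis to this $\cP$, this $F$, and a small parameter $\epsilon_0>0$ to be pinned down at the end, obtaining $w\in W$ and a partition $\cQ=\{Q_1,\dots,Q_n\}$ with $|\mu(P_i\cap v_gP_j)-\mu(Q_i\cap w_gQ_j)|<\epsilon_0$ for all $g\in F$, $i,j\le n$. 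Taking $g=e$ and $i=j$ gives $|\mu(P_i)-\mu(Q_i)|<\epsilon_0$, so, since $\mu$ is non-atomic, one can choose a partition $\cP'=\{P_1',\dots,P_n'\}$ with $\mu(P_i')=\mu(Q_i)$ and $\mu(P_i\vartriangle P_i')<\epsilon_0$ for each $i$, and then $T\in\Aut(X,\mu)$ with $TQ_i=P_i'$ for all $i$. Since $[a]_{OE}$ — the set $W$ to which the lemma will be applied — is invariant under conjugation by elements of $\Aut(X,\mu)$, the action $w^T:=TwT^{-1}$ again lies in $W$, and everything reduces to showing $w^T\in U$, that is, $\mu(w^T_gC_l\vartriangle v_gC_l)<\delta$ for all $g\in F$, $l\le m$.

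This last point is the crux and the step I expect to demand the most care, since the hypothesis supplies closeness of \emph{measures of pairwise intersections} whereas I need smallness of a \emph{symmetric difference}; the choice of $\cP$ is what converts one into the other. Fix $g\in F$, $l\le m$ and write $C_l=\bigcup_{i\in I_l}P_i$. The sets $w^T_gP_i$ ($i\in I_l$) are pairwise disjoint, and a routine chase of the approximation $P_i\approx TQ_i$ through $T^{-1}$, $w_g$ and $T$, together with the displayed inequality, yields $|\mu(w^T_gP_i\cap P_k)-\mu(v_gP_i\cap P_k)|<3\epsilon_0$ for all $i\in I_l$ and all $k$; summing over $i\in I_l$ gives $|\mu(w^T_gC_l\cap P_k)-\mu(v_gC_l\cap P_k)|<3n\epsilon_0$ for every $k$. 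Now $v_gC_l$ is, by construction, a union of atoms of $\cP$, so $\mu(v_gC_l\cap P_k)$ is $\mu(P_k)$ if $P_k\subseteq v_gC_l$ and $0$ otherwise; hence $w^T_gC_l$ fills each atom inside $v_gC_l$ up to error $<3n\epsilon_0$ and meets each atom disjoint from $v_gC_l$ in measure $<3n\epsilon_0$, so summing over the (at most $n$) atoms gives $\mu(w^T_gC_l\vartriangle v_gC_l)<3n^2\epsilon_0$. Choosing $\epsilon_0<\delta/(3n^2)$ makes this $<\delta$ for all relevant $g,l$, so $w^T\in W\cap U$. Apart from this measure-to-symmetric-difference passage, the only non-formal ingredients are the standard non-atomic perturbation and gluing used to build $\cP'$ and $T$; everything else is the triangle inequality for $\mu(\,\cdot\,\vartriangle\,\cdot\,)$.
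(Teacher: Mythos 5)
Your proof is correct in outline and the key technical steps (choosing $\cP$ to be the Boolean algebra generated by the $C_l$ and $v_gC_l$, converting closeness of pair-statistics into smallness of symmetric differences, the perturbation building $\cP'$ and $T$) all work, with the usual freedom in the constants. The paper offers no argument of its own here --- its ``proof'' is a pointer to \cite[Proposition 10.1]{Ke10} and \cite[Theorem 1]{CKT12} --- so yours is a genuine self-contained proof and not a reprise of the paper's.

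One point deserves emphasis, because it shows you were being more careful than the paper's wording: your ``if'' direction uses, and must use, the hypothesis that $W$ is invariant under conjugation by $\Aut(X,\mu)$. You flag this yourself when you replace $w$ by $w^T$. This is not a cosmetic shortcut but a genuine necessity: the lemma as literally stated, for arbitrary $W\subset A(G,X,\mu)$, is false. Take $G=\Z$, let $W=\{w\}$ be a single automorphism, and let $v=TwT^{-1}$ with $T$ chosen so that $v\ne w$ in $\Aut(X,\mu)$; then the condition in the lemma holds exactly (set $\cQ=T^{-1}\cP$), yet $v\notin\overline{W}=\{w\}$. The references the paper cites deal with weak containment (closure of a measure-conjugacy class) or otherwise build in conjugation-invariance, and the paper only ever applies the lemma to $W=[a]_{OE}$, which is conjugation-invariant, so nothing downstream is affected. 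But the hypothesis you inserted is exactly the missing one, and your proof is the right proof of the correct statement.

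Minor: the constant $3\epsilon_0$ in the intersection estimate depends on arranging $\mu(P_i\vartriangle P_i')\le|\mu(P_i)-\mu(Q_i)|<\epsilon_0$ (achievable, since each $P_i'$ either adds or removes mass but not both); if one is sloppier there the constant grows but the argument is unchanged. Also, when building $T$ you should allow some atoms $Q_i$, $P_i'$ to be null --- the isomorphism theorem for standard spaces still applies piecewise. Neither affects correctness.
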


\begin{proof}
This is essentially the same as \cite[Proposition 10.1]{Ke10}. It also follows from \cite[Theorem 1]{CKT12}. 
\end{proof}

\begin{cor}\label{cor:fin-gen}
In order to prove Theorem \ref{thm:main}, it suffices to prove the special case in which $G$ is finitely generated.
\end{cor}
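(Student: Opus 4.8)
The plan is to reduce the general (at most countably generated) case to the finitely generated case by a direct approximation argument using the characterization of closure given in Lemma~\ref{lem:Kechris}. First I would observe that $A(G,X,\mu)$ closure is tested only against \emph{finite} data: an $\epsilon>0$, a finite partition $\cP$, and a finite set $F\subset G$. So fix $b\in A(G,X,\mu)$ and such data $(\epsilon,\cP,F)$; we want to produce $a'\in[a]_{OE}$ and a partition $\cQ$ matching the numbers $\mu(P_i\cap b_g P_j)$ to within $\epsilon$ for $g\in F$.

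The key step is to replace $G$ by a finitely generated subgroup that contains everything relevant. Let $S=\{s_1,s_2,\ldots\}$ be a free generating set of $G$ and let $H\le G$ be the subgroup generated by those finitely many generators needed to express all elements of $F$; then $H$ is a finitely generated free group, and $F\subset H$. Restricting, $a|_H$ is an essentially free action of $H$ and $b|_H$ is an action of $H$. Assuming Theorem~\ref{thm:main} in the finitely generated case, $[a|_H]_{OE}$ is dense in $A(H,X,\mu)$, so by Lemma~\ref{lem:Kechris} applied to $H$ there is an action $c\in A(H,X,\mu)$ that is orbit-equivalent to $a|_H$ (via some $T\in\Aut(X,\mu)$) and a partition $\cQ$ with $|\mu(P_i\cap b_g P_j)-\mu(Q_i\cap c_g Q_j)|<\epsilon$ for all $g\in F$ and all $i,j$.

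The remaining task is to extend the $H$-action $c$ to an action $a'$ of all of $G$ that (i) agrees with $c$ on $H$ and (ii) lies in $[a]_{OE}$. This is where freeness of $G$ is used crucially and is, I expect, the only real point requiring care. Since $c=(a|_H)^T$ for the conjugating $T$, the natural move is simply to set $a'=a^T$, i.e. $a'_g=T a_g T^{-1}$ for all $g\in G$; this is automatically orbit-equivalent to $a$ (it is even measure-conjugate), and its restriction to $H$ is exactly $c$. Then $a'\in[a]_{OE}$ and $a'_g=c_g$ for $g\in F\subset H$, so the partition $\cQ$ witnesses $|\mu(P_i\cap b_g P_j)-\mu(Q_i\cap a'_g Q_j)|<\epsilon$ for $g\in F$. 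Since $(\epsilon,\cP,F)$ were arbitrary, Lemma~\ref{lem:Kechris} gives $b\in\overline{[a]_{OE}}$, and since $b$ was arbitrary, $[a]_{OE}$ is dense in $A(G,X,\mu)$.

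I should double-check one subtlety: the orbit equivalence in the definition is witnessed by a \emph{single} $T\in\Aut(X,\mu)$ mapping $a$-orbits to $a'$-orbits, and here $a'=a^T$ literally has $a'$-orbits equal to $T$ applied to $a$-orbits, so this is fine without needing any genuinely new orbit equivalence for the big group — the finitely generated case does all the work and the passage to $G$ is purely formal. The main obstacle is thus not in this corollary at all but in ensuring that the cited finitely generated statement is exactly what is proved later; the reduction itself is routine once Lemma~\ref{lem:Kechris}'s finite-data criterion is in hand.
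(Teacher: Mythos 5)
Your reduction is set up correctly (fix finite data $(\epsilon,\cP,F)$, pass to the finitely generated subgroup $H$ containing $F$, invoke the f.g. case), but the crucial step — extending the $H$-action $c$ to a $G$-action in $[a]_{OE}$ — contains a genuine error. You write that $c$ is orbit-equivalent to $a|_H$ ``via some $T$'' and then conclude $c=(a|_H)^T$. That is not what orbit equivalence means. Orbit equivalence via $T$ means that $c$ and $(a|_H)^T$ have the \emph{same orbits} almost everywhere; it does not mean they are the same action. If it did, orbit equivalence would collapse to measure conjugacy, and the theorem would be false: as noted in the introduction, $[a]_{MC}$ is never dense when $a$ is strongly ergodic and $G$ is nonamenable. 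So setting $a'=a^T$ does not restrict to $c$ on $H$, and your estimate for $g\in F$ does not transfer.

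The correct extension, and the place where freeness of $G$ is genuinely used, is to define $a'$ generator-by-generator rather than by a single conjugation: writing $S=\{s_1,s_2,\ldots\}$ for a free generating set with $H=\langle s_1,\ldots,s_n\rangle\supset F$, set $a'(s_i)=c(s_i)$ for $i\le n$ and $a'(s_i)=T a(s_i) T^{-1}$ for $i>n$. Because $G$ is free on $S$, this defines a homomorphism $a':G\to\Aut(X,\mu)$. Its restriction to $H$ is exactly $c$, so the estimate for $g\in F$ holds, and it is orbit-equivalent to $a$: the equivalence relation generated by $\{a'(s_i)\}_{i\le n}=\{c(s_i)\}_{i\le n}$ equals that generated by $\{a^T(s_i)\}_{i\le n}$ (this is what orbit equivalence of $c$ with $a|_H$ via $T$ gives you), while for $i>n$ the generators literally agree with $a^T$; hence $a'$ and $a^T$ generate the same orbit equivalence relation, and $a^T\in[a]_{OE}$. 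This piecewise definition is the paper's argument, and it cannot be replaced by a single global conjugation.
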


\begin{proof}
%Let us assume Theorem \ref{thm:main} is true for finitely generated free groups.
 Let $G$ be a countably generated free group with free generating set $S=\{s_1,s_2,\ldots \} \subset G$. Let $a,b \in A(G,X,\mu)$ and suppose $a$ is essentially free. Let $\epsilon>0$, $\cP=\{P_1,\ldots, P_n\}$ be a Borel partition of $X$ and $F \subset G$ be finite. By Lemma \ref{lem:Kechris} it suffices to show there exists $a'\in [a]_{OE}$ and a finite Borel partition $\cQ=\{Q_1,\ldots, Q_n\}$ of $X$ such that
 \begin{eqnarray}\label{eqn:a'}
  | \mu(P_i \cap b_g P_j) - \mu(Q_i \cap a'_g Q_j)| < \epsilon
  \end{eqnarray}
for every $g \in F$ and $1\le i,j\le n$.
Let $G_n<G$ be the subgroup generated by $\{s_1,\ldots, s_n\}$. Choose $n$ large enough so that $F \subset G_n$. Because we are assuming Theorem \ref{thm:main} is true for finitely generated free groups, there exists an action $a'' \in A(G_n,X,\mu)$ orbit-equivalent to $a|_{G_n}$ such that 
\begin{eqnarray}\label{eqn:a''}
 | \mu(P_i \cap b_g P_j) - \mu(Q_i \cap a''_g Q_j)| < \epsilon
 \end{eqnarray}
for every $g \in F$ and $1\le i,j\le n$. By definition of orbit-equivalence, there exists an automorphism $T \in \Aut(X,\mu)$ such that $a''$ and $(a|_{G_n})^T$ have the same orbits. 

Define $a' \in A(G,X,\mu)$ by $a'(s_i) = a''(s_i)$ if $1\le i \le n$ and $a'(s_i) = Ta(s_i)T^{-1}$ for $i>n$. Then clearly $a'$ is orbit-equivalent to $a$ and $a'$ satisfies (\ref{eqn:a'}) because of (\ref{eqn:a''}).% and $a''_g=a'_g$ for $g\in F$.

\end{proof}

The next result implies that we can replace the finite set $F\subset G$ appearing in the lemma above with a fixed generating set $S\subset G$. This is crucial to the whole approach because it allows us to reduce Theorem \ref{thm:main} from a problem about actions of the free group to a problem about actions of the integers.

\begin{lem}\label{lem:generator}
Let $G$ be a group with a finite symmetric generating set $S$. Let $v \in A(G,X,\mu)$ and $W \subset A(G,X,\mu)$. Suppose that for every $\epsilon>0$ for every finite Borel partition $\cP=\{P_1,\ldots, P_n\}$ of $X$ there exists $w\in W$ and a finite Borel partition $\cQ=\{Q_1,\ldots, Q_n\}$ of $X$ such that
$$ | \mu(P_i \cap v_s P_j) - \mu(Q_i \cap w_s Q_j)| < \epsilon$$
for every $s\in S$ and $1\le i,j\le n$. Then $v \in \overline{W}$.
\end{lem}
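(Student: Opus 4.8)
The plan is to verify the criterion of Lemma~\ref{lem:Kechris}. So fix $\epsilon>0$, a finite Borel partition $\cP=\{P_1,\dots,P_n\}$ of $X$, and a finite set $F\subseteq G$; I must produce $w\in W$ and a partition $\cQ=\{Q_1,\dots,Q_n\}$ with $|\mu(P_i\cap v_gP_j)-\mu(Q_i\cap w_gQ_j)|<\epsilon$ for all $g\in F$ and all $i,j$. Set $R=\max_{g\in F}|g|_S$ and let $B=\{h\in G:|h|_S\le R\}$, which is finite because $S$ is. The first step is to pass to the common refinement $\cP^{*}:=\bigvee_{h\in B}v_h\cP$, whose atoms $P^{*}_\alpha:=\bigcap_{h\in B}v_hP_{\alpha_h}$ are indexed by types $\alpha:B\to\{1,\dots,n\}$, of which there are $N:=n^{|B|}$. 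Since the atom of $x$ in $\cP^{*}$ records, for each $h\in B$, which cell of $\cP$ contains $v_h^{-1}x$, and since membership of $x$ in $P_i$ (resp.\ in $v_gP_j$) is decided by the cell at $h=e$ (resp.\ at $h=g$) --- both in $B$ when $g\in F$ --- one gets $P_i=\bigsqcup_{\alpha_e=i}P^{*}_\alpha$ and $v_gP_j=\bigsqcup_{\alpha_g=j}P^{*}_\alpha$, so that $\mu(P_i\cap v_gP_j)=\sum_{\alpha_e=i,\ \alpha_g=j}\mu(P^{*}_\alpha)$ for every $g\in F$. This reduces everything to understanding $\cP^{*}$.

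Next I apply the hypothesis of the lemma to the partition $\cP^{*}$ with a small parameter $\epsilon'$ to be fixed at the end (in terms of $\epsilon,R,N$): this produces $w\in W$ and a partition $\cQ^{*}=\{Q^{*}_\alpha\}$ indexed by the same type set, with $|\mu(P^{*}_\alpha\cap v_sP^{*}_\beta)-\mu(Q^{*}_\alpha\cap w_sQ^{*}_\beta)|<\epsilon'$ for all $s\in S$ and all $\alpha,\beta$; summing over $\beta$ gives $|\mu(P^{*}_\alpha)-\mu(Q^{*}_\alpha)|<N\epsilon'$. I then define $Q_i:=\bigsqcup_{\alpha_e=i}Q^{*}_\alpha$. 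The key structural observation is that the generator-correlations of $\cP^{*}$ are essentially combinatorial: for $s\in S$ and a type $\beta$ one has $v_sP^{*}_\beta\subseteq\bigcup_{\alpha\in\Gamma_s(\beta)}P^{*}_\alpha$, where $\Gamma_s(\beta):=\{\alpha:\alpha_h=\beta_{s^{-1}h}\text{ for all }h\in B\cap sB\}$, and hence $\mu(P^{*}_\alpha\cap v_sP^{*}_\beta)=0$ whenever $\alpha\notin\Gamma_s(\beta)$. Feeding this into the approximation above forces $\mu\bigl(w_sQ^{*}_\beta\setminus\bigcup_{\alpha\in\Gamma_s(\beta)}Q^{*}_\alpha\bigr)<N\epsilon'$ for every $s\in S$ and every $\beta$: that is, $w_s$ approximately respects the ``combinatorial shift'' $\Gamma_s$, in the one-sided sense of almost-containment.

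The heart of the proof --- and the step I expect to be the main obstacle --- is propagating this from single generators to all of $F$. Writing each $g\in F$ as a word $s_1\cdots s_\ell$ of minimal length $\ell=|g|_S\le R$ (so that every suffix $s_m\cdots s_\ell$ is again of minimal length, hence lies in $B$), I would prove by induction on $\ell$ that $\mu\bigl(w_gQ_j\setminus\bigcup_{\alpha_g=j}Q^{*}_\alpha\bigr)<\ell N^{2}\epsilon'$: the inductive step applies the measure-preserving $w_{s_1}$ to the bound for $g'=s_2\cdots s_\ell$, then applies the generator estimate of the previous paragraph to each of the at most $N$ atoms $Q^{*}_\beta$ that occur, and finally uses the purely set-theoretic identity $\{\alpha:\exists\beta,\ \beta_{g'}=j,\ \alpha\in\Gamma_{s_1}(\beta)\}=\{\alpha:\alpha_g=j\}$, valid precisely because $g,g'\in B$. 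What makes the one-sided (almost-containment) formulation essential is that for a non-generator $g$ the individual correlations $\mu(Q^{*}_\alpha\cap w_gQ^{*}_\beta)$ are \emph{not} controlled by the hypothesis --- controlling them would require multi-point correlations --- so one must resist the temptation to match them and track only almost-containments, whose errors accumulate merely additively over the $\le R$ letters of $g$.

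Once this is established, the reverse containment is essentially free: since $v_g$ and $w_g$ are measure-preserving, $\mu(w_gQ_j)=\mu(Q_j)=\sum_{\alpha_e=j}\mu(Q^{*}_\alpha)$ is within $N^{2}\epsilon'$ of $\sum_{\alpha_e=j}\mu(P^{*}_\alpha)=\mu(v_gP_j)=\mu(P_j)$, and likewise $\sum_{\alpha_g=j}\mu(Q^{*}_\alpha)$ is within $N^2\epsilon'$ of $\mu(P_j)$, so $\mu\bigl(w_gQ_j\vartriangle\bigcup_{\alpha_g=j}Q^{*}_\alpha\bigr)=O(RN^{2}\epsilon')$. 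Intersecting with $Q_i=\bigsqcup_{\alpha_e=i}Q^{*}_\alpha$ gives $\mu(Q_i\cap w_gQ_j)=\sum_{\alpha_e=i,\ \alpha_g=j}\mu(Q^{*}_\alpha)+O(RN^{2}\epsilon')$, which is within $O(RN^{2}\epsilon')$ of $\sum_{\alpha_e=i,\ \alpha_g=j}\mu(P^{*}_\alpha)=\mu(P_i\cap v_gP_j)$. Choosing $\epsilon'$ small enough in terms of $\epsilon$, $R$ and $N$ makes this less than $\epsilon$ for all $g\in F$ and all $i,j$, which is exactly what Lemma~\ref{lem:Kechris} demands.
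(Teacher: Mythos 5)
Your proof is correct and follows essentially the same strategy as the paper's: refine $\cP$ over a word-metric ball, apply the hypothesis to that refinement, and propagate the resulting error from the generators to all of $F$ by induction on word length, exploiting the combinatorial fact that $v_s$ permutes the atoms of the refinement (your $\Gamma_s$; in the paper this appears as the vanishing of $\mu(P_1\cap v_sP_2)$ for the off-diagonal pairs). The only bookkeeping difference is that you track a one-sided almost-containment in the induction and recover the symmetric-difference bound afterwards from measure-preservation, whereas the paper tracks $\mu(\beta(v_gP)\vartriangle w_g\beta(P))$ directly -- so your remark that the one-sided formulation is ``essential'' is a slight overstatement, but the argument is sound.
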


\begin{proof}
Let $\epsilon>0$, $\cP=\{P_1,\ldots,P_n\}$ be a Borel partition of $X$ and $F \subset G$ be a finite set. By Lemma \ref{lem:Kechris} it suffices to show that  there exists $w\in W$ and a finite Borel partition $\cQ=\{Q_1,\ldots, Q_n\}$ of $X$ such that
\begin{eqnarray}\label{eqn:Q}
 | \mu(P_i \cap v_g P_j) - \mu(Q_i \cap w_g Q_j)| < \epsilon
 \end{eqnarray}
for every $g \in F$ and $1\le i,j\le n$.

In order to do this, we may assume that for some integer $r\ge 0$, $F=B(e,r)$ is the ball of radius $r$ centered at the identity in $G$ with respect to the word metric $d_S(\cdot,\cdot)$ induced by the generating set $S$. 

Let $\cP' = \bigvee_{g\in F} g\cP$ be the common refinement of the partitions $\{g\cP:~g \in F\}$. By hypothesis, there exists a partition $\cQ'$ of $X$, a bijection $\beta: \cP' \to \cQ'$ and an action $w\in W$ such that
\begin{eqnarray}\label{eqn:weak}
 | \mu(P' \cap v_s P'') - \mu( \beta(P') \cap w_s \beta(P''))| < \epsilon |\cP'|^{-2} |F|^{-1}/4
 \end{eqnarray}
for every $P', P'' \in \cP'$ and $s\in S$.

Let $\Sigma(\cP')$ denote the sigma-algebra generated by $\cP'$ (and define $\Sigma(\cQ')$  similarly). There is a unique boolean-algebra homomorphism from $\Sigma(\cP')$ to $\Sigma(\cQ')$ extending $\beta$. We also let $\beta$ denote this homomorphism.

%For $P \in \cP$, let $\beta(P) = \cup \{ \beta(P') :~ P' \in \cP', P' \subset P\}$. 
Let $\cQ=\{\beta(P):~P \in \cP\}$. It is immediate that $\cQ$ is a finite Borel partition of $X$. We will show that it satisfies (\ref{eqn:Q}). 

\noindent {\bf Claim 1}. $|\mu(\beta(P)) - \mu(P)| < \epsilon/2$ for every $P \in \Sigma(\cP')$.

\begin{proof}[Proof of Claim 1]
Let $s\in S$. By (\ref{eqn:weak})
\begin{eqnarray*}
|\mu(\beta(P)) - \mu(P)| &\le & \sum_{P', P''}  | \mu(P' \cap v_s P'') - \mu( \beta(P') \cap w_s \beta(P''))| < \epsilon/2
\end{eqnarray*} 
where the sum is over all $P', P'' \in \cP'$ with $P' \subset P$.
\end{proof}

\noindent {\bf Claim 2}. $\mu( \beta(v_gP) \vartriangle w_g\beta(P)) \le \epsilon |g|/(2|F|)$ for all $P \in \cP$ and $g\in F$ where $|g|$ denotes the word length of $g$. Moreover equality holds only in the case $|g|=0$.

\begin{proof}[Proof of Claim 2]
We prove this by induction on the word length $|g|$. It is obviously true when $|g|=0$. So we assume there is an integer $m\ge 0$ such that the statement is true for all $g$ with $|g|\le m$. Now suppose that $|g|=m+1$ and $g\in F$. Then $g=sh$ for some $h\in F$ and $s\in S$ such that $|h|=m$. By induction,
\begin{eqnarray*}
\mu( \beta(v_gP) \vartriangle w_g\beta(P)) &=& \mu( \beta(v_{sh}P) \vartriangle w_{sh}\beta(P)) \\
&\le&\mu( \beta(v_{sh}P) \vartriangle w_s\beta(v_hP))  +  \mu( w_s\beta(v_hP) \vartriangle w_{sh}\beta(P)) \\
&=& \mu( \beta(v_{sh}P) \vartriangle w_s\beta(v_hP))  +  \mu( \beta(v_hP) \vartriangle w_h\beta(P)) \\
&\le& \mu( \beta(v_{sh}P) \vartriangle w_s\beta(v_hP))  +  \epsilon |h|/(2|F|). 
\end{eqnarray*}
Next we observe that
\begin{eqnarray*}
\mu(\beta(v_{sh}P) \vartriangle w_s\beta(v_hP)) = \sum_{P_1,P_2} \mu(\beta(P_1) \cap w_s \beta(P_2)) + \sum_{P_3,P_4} \beta(P_3 \cap w_s\beta(P_4))
\end{eqnarray*}
where the first sum is over all $P_1,P_2 \in \cP'$ such that $P_1 \subset v_{sh}P$ and $P_2 \cap v_hP = \emptyset$ while the second sum is over all $P_3,P_4 \in \cP'$ such that $P_3 \cap v_{sh}P = \emptyset$ and $P_4 \subset v_hP$. 

By (\ref{eqn:weak}) if $(i,j)=(1,2)$ or $(i,j)=(3,4)$ as above then
\begin{eqnarray*}
\mu(\beta(P_i) \cap w_s \beta(P_j)) < \mu(P_i \cap v_s P_j) +  \epsilon |\cP'|^{-2}/(4|F|) = \epsilon |\cP'|^{-2}/(4|F|).
\end{eqnarray*}
Therefore,
\begin{eqnarray*}
\mu(\beta(v_{sh}P) \vartriangle w_s\beta(v_hP)) < \epsilon/(2|F|). 
\end{eqnarray*}
This implies the claim.
\end{proof}

Next we verify (\ref{eqn:Q}) with $Q_i = \beta(P_i)$: %So let $g\in F$. 
\begin{eqnarray*}
 | \mu(P_i \cap v_g P_j) - \mu(Q_i \cap w_g Q_j)| &=&  | \mu(P_i \cap v_g P_j) - \mu(\beta(P_i) \cap w_g \beta(P_j))| \\
 &<&  | \mu(P_i \cap v_g P_j) - \mu(\beta(P_i) \cap  \beta(v_g P_j))| + \epsilon |g|/(2|F|)\\
  &=& | \mu(P_i \cap v_g P_j) - \mu(\beta(P_i \cap  v_g P_j))| + \epsilon |g|/(2|F|) \\
  &<& \epsilon/2 + \epsilon |g|/(2|F|)  \le \epsilon.
  \end{eqnarray*}  
The first inequality follows from Claim 2 and the second inequality from Claim 1.

%As above, there exists $h\in F$ and $s\in S$ with $g=sh$. Let $Q_i = \beta(P_i)$ for $1\le i \le n$.
%\begin{eqnarray*}
%| \mu(P_i \cap v_gP_j) - \mu(Q_i \cap w_gQ_j)| &=& | \mu(v_{s^{-1}}P_i \cap v_hP_j) - \mu(w_{s^{-1}}Q_i \cap w_hQ_j)|  \\
%&\le& | \mu(v_{s^{-1}}P_i \cap v_hP_j) - \mu(w_{s^{-1}}Q_i \cap \beta(v_hP_j) ) | \\
%&&+ |  \mu(w_{s^{-1}}Q_i \cap \beta(v_hP_j) ) - \mu(w_{s^{-1}}Q_i \cap w_hQ_j)| \\
%&\le& \epsilon/2 +  |h| \epsilon/(2|F|) \le \epsilon.
% \end{eqnarray*}
%The second inequality results from two estimates. First,  (\ref{eqn:weak}) implies $| \mu(v_{s^{-1}}P_i \cap v_hP_j) - \mu(w_{s^{-1}}Q_i \cap \beta(v_hP_j) ) |  \le \epsilon/2$ since $v_h P_j \in \cP'$ and $Q_i =\beta(P_i)$. Second, Claim 1 implies 
%$$|  \mu(w_{s^{-1}}Q_i \cap \beta(v_hP_j) ) - \mu(w_{s^{-1}}Q_i \cap w_hQ_j)| \le \mu(\beta(v_hP_j) \vartriangle w_h Q_j) \le |h| \epsilon/(2|F|).$$

\end{proof}

\section{A combinatorial lemma}

The next lemma will be used to rearrange partial orbits of a single transformation. Roughly speaking it states that any partial orbit which is roughly equidistributed with respect to some partition can be rearranged so as to approximate the local statistics of any given Markov chain on the partition.

\begin{lem}\label{lem:combinatorial}
Let $\cA$ be a finite set, $\pi$ be a probability distribution on $\cA$ and $\cJ$ be a self-coupling of $\pi$ (so $\cJ$ is a probability distribution on $\cA\times \cA$ such that the projection of $\cJ$ to either factor is $\pi$). Let $0<\epsilon<1$ and $N>0$ be an integer and $\phi:\{1,\ldots, N\} \to \cA$ a map such that if $\pi'$ is the empirical distribution of $\phi$ then $\| \pi' - \pi\|_\infty < \epsilon$. (By definition, $\pi' = \phi_* u_N $ where $u_N$ is the uniform probability distribution on $\{1,\ldots, N\}$). We assume $\min_{a,b \in \cA} \cJ(a,b) >2 |\cA|\epsilon + |\cA|^2/N$.

Then there exists a bijection $\sigma=\sigma_\phi:\{1,\ldots, N-1\} \to \{2,\ldots, N\}$ such that if $\Gamma(\sigma)$ is the graph with vertices $\{1,\ldots,N\}$ and edges $\{ (i,\sigma(i)):~ 1\le i \le N-1\}$ then
\begin{itemize}
\item $\Gamma(\sigma)$ is connected (so it is isomorphic to a line graph)
\item if $\Phi_\sigma:\{1,\ldots, N-1\} \to \cA \times \cA$ is the map $\Phi_\sigma(i) = ( \phi(i), \phi(\sigma(i)))$ and $\cJ_\sigma=(\Phi_\sigma)_*u_{N-1}$ is the empirical distribution of $\Phi_\sigma$ then
$$\| \cJ_\sigma - \cJ\|_\infty  < 2|\cA|\epsilon + 3|\cA|^2/N.$$
\end{itemize}
\end{lem}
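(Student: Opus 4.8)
The plan is to recognize this as an Eulerian‑trail problem. A bijection $\sigma:\{1,\dots,N-1\}\to\{2,\dots,N\}$ with $\Gamma(\sigma)$ connected is exactly a Hamiltonian path $1=j_1,j_2,\dots,j_N=N$ on the vertex set $\{1,\dots,N\}$, with $\sigma(j_t)=j_{t+1}$: the functional digraph of any such $\sigma$ decomposes into one directed path from the unique source $1$ to the unique sink $N$ together with some cycles, and connectedness rules out the cycles. For such a path, writing $c_t=\phi(j_t)$, one has $\cJ_\sigma(a,b)=n_{ab}/(N-1)$ where $n_{ab}=\#\{t:\ c_t=a,\ c_{t+1}=b\}$. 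So the problem reduces to producing a word $c_1=\phi(1),c_2,\dots,c_N=\phi(N)$ over $\cA$ in which each letter $a$ occurs exactly $m_a:=\#\phi^{-1}(a)$ times and whose consecutive‑pair counts $n_{ab}$ approximate $N\cJ(a,b)$; such a word can then be realized as a Hamiltonian path by matching the positions carrying letter $a$ bijectively with the indices in $\phi^{-1}(a)$, pinning position $1\mapsto 1$ and position $N\mapsto N$ (consistent because the two ends of the word carry the correct letters, and the letter counts agree).

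First I would build the integer matrix $(n_{ab})_{a,b\in\cA}$, thought of as the edge‑multiplicity matrix of a multigraph $H$ on $\cA$. A word as above forces the out‑degrees $d^+_a=m_a-[a=\phi(N)]$ and in‑degrees $d^-_a=m_a-[a=\phi(1)]$; these are nonnegative integers summing to $N-1$, within $N\epsilon+1$ of $N\pi(a)$ (since $\|\pi'-\pi\|_\infty<\epsilon$), and they satisfy the Eulerian‑trail degree condition (in $=$ out everywhere except possibly at $\phi(1)$ and $\phi(N)$). So I want integers $n_{ab}\ge 0$ with these row/column sums and with $n_{ab}$ close to $N\cJ(a,b)$. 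I would first correct $N\cJ$ to a \emph{real} matrix $X$ with exactly these margins, e.g.
$$X_{ab}=N\cJ(a,b)+\tfrac{1}{|\cA|}\bigl(d^+_a-N\pi(a)\bigr)+\tfrac{1}{|\cA|}\bigl(d^-_b-N\pi(b)\bigr)+\tfrac{1}{|\cA|^2},$$
using $\sum_a(d^+_a-N\pi(a))=\sum_b(d^-_b-N\pi(b))=-1$; then $|X_{ab}-N\cJ(a,b)|<(2N\epsilon+3)/|\cA|$, so the hypothesis $N\cJ(a,b)>2N|\cA|\epsilon+|\cA|^2$ keeps $X_{ab}>1$. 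Next I would round $X$ to a nonnegative integer matrix $(n_{ab})$ with the \emph{same} margins and $|n_{ab}-X_{ab}|<1$, by taking an integer vertex of the transportation polytope cut down by the interval constraints $\lfloor X_{ab}\rfloor\le Z_{ab}\le\lceil X_{ab}\rceil$ (a totally unimodular system with integer right‑hand sides). In particular $n_{ab}\ge 1$ for every $a,b$, and $|n_{ab}-N\cJ(a,b)|<1+(2N\epsilon+3)/|\cA|$.

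The remaining steps are routine. Since $n_{ab}\ge 1$ for every ordered pair, $H$ is strongly connected, hence carries an Eulerian trail from $\phi(1)$ to $\phi(N)$ (an Eulerian circuit based at $\phi(1)$ when $\phi(1)=\phi(N)$); its vertex sequence is a word $c_1=\phi(1),\dots,c_N=\phi(N)$ with consecutive‑pair counts exactly $n_{ab}$ and with each letter $a$ occurring $m_a$ times. Realizing it as a Hamiltonian path and setting $\sigma(j_t)=j_{t+1}$ makes $\Gamma(\sigma)$ connected with $\cJ_\sigma(a,b)=n_{ab}/(N-1)$, whence
$$|\cJ_\sigma(a,b)-\cJ(a,b)|=\frac{|n_{ab}-(N-1)\cJ(a,b)|}{N-1}\le\frac{|n_{ab}-N\cJ(a,b)|+1}{N-1}<\frac{2+(2N\epsilon+3)/|\cA|}{N-1},$$
which, using $N/(N-1)\le 2$, is below $2|\cA|\epsilon+3|\cA|^2/N$ whenever $|\cA|\ge 2$ (the case $|\cA|=1$ being trivial, with $\sigma(i)=i+1$).

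I expect the only real obstacle to be the middle step: producing a single integer transition matrix that simultaneously (i) has the margins dictated by the letter counts — needed so the Eulerian trail reassembles into a genuine path on $\{1,\dots,N\}$ — (ii) stays entrywise $O(N\epsilon/|\cA|+1)$‑close to $N\cJ$ so the final estimate goes through, and (iii) has all entries positive so that $H$ is connected. Preconditioning $N\cJ$ to the exact margins and then rounding inside the transportation polytope reconciles these three demands, and the one delicate point is checking that the constants fit under the hypothesis $\min_{a,b}\cJ(a,b)>2|\cA|\epsilon+|\cA|^2/N$.
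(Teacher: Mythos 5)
Your proposal is correct, and it takes a genuinely different route from the paper. The paper first rounds $\cJ$ to a self-coupling $\cJ'$ of the \emph{empirical} distribution $\pi'$ with values in $\Z[1/N]$ (Claim~1), then uses the slices $\phi^{-1}(a)$ to build a bijection $\tau$ whose pair-statistics match $\cJ'$ up to $1/N$ (Claim~2), and only afterwards worries about connectivity: among all $\tau$ satisfying Claim~2, take one minimizing the number of components of $\Gamma(\tau)$, show by a swap argument that there are at most $|\cA|^2$ components (Claim~3), and then do a ``rotation'' through one representative per component to stitch them into a single path, paying a $|\cA|^2/N$ penalty. You instead observe up front that a connected $\sigma$ is exactly a Hamiltonian path from $1$ to $N$, translate the problem into building a word over $\cA$ with prescribed endpoint letters, letter multiplicities, and transition counts, and then solve it in one shot: adjust $N\cJ$ to a real matrix $X$ with the exact Euler-trail margins $d^\pm$, round inside the transportation polytope with box constraints (TU, so integer vertices exist) to get a positive integer matrix $n$, and read off the word as an Eulerian trail in the multigraph on $\cA$ with edge multiplicities $n_{ab}$ --- strong connectivity is automatic because every $n_{ab}\ge 1$, and the Euler trail exists because the margins were engineered to satisfy the degree condition exactly. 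The two approaches trade different tools: the paper's argument is elementary and self-contained but needs the minimal-component + rotation device and incurs its error through two separate rounding steps; yours uses the (standard but not elementary) TU rounding and Euler-trail theorems to get connectivity for free and keeps the error accounting in one place. The one point to spell out if you wanted a fully airtight write-up is the constant check at the very end (which you flagged): the bound $\frac{2+(2N\epsilon+3)/|\cA|}{N-1}<2|\cA|\epsilon+3|\cA|^2/N$ needs $|\cA|\ge2$ and $N\ge2$, both of which are harmless since $|\cA|=1$ and $N=1$ are trivial, and the positivity $X_{ab}>1$ (hence $n_{ab}\ge1$) does follow from the hypothesis $N\cJ(a,b)>2N|\cA|\epsilon+|\cA|^2$ once one notes $2N|\cA|\epsilon\ge 2N\epsilon/|\cA|$ and $|\cA|^2-2/|\cA|\ge1$ for $|\cA|\ge2$.
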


\begin{proof}
%Let $\pi'=\phi_*u_N$ be the empirical distribution of $\phi$. 

\noindent {\bf Claim 1}. There exists a self-coupling $\cJ'$ of $\pi'$ such that 
$$\| \cJ' - \cJ \|_\infty < 2 |\cA|\epsilon + |\cA|^2/N$$
and $\cJ'$ takes values only in $\Z[1/N]$. 
\begin{proof}[Proof of Claim 1]
Let $a\in \cA$. For $b,c\in \cA \setminus \{a\}$, let $\cJ'(b,c)$ be the closest number in $\Z[1/N]$ to $\cJ(b,c)$. Define 
\begin{eqnarray*}
\cJ'(a,c) &=& \pi'(c) - \sum_{t \in \cA\setminus \{a\}} \cJ'(t,c) \\
\cJ'(b,a) &=& \pi'(b) - \sum_{t \in \cA\setminus \{a\}} \cJ'(b,t) \\
\cJ'(a,a) &=& \pi'(a) - \sum_{t \in \cA \setminus \{a\}} \cJ'(a,t) = \pi'(a) - \sum_{t \in \cA \setminus \{a\}} \cJ'(t,a).
\end{eqnarray*}
It is straightforward to check that 
$$\| \cJ' - \cJ \|_\infty < \epsilon +|\cA|( \epsilon + |\cA|/N) \le 2 |\cA|\epsilon + |\cA|^2/N.$$
Because $\min_{a,b \in \cA} \cJ(a,b) >2 |\cA|\epsilon + |\cA|^2\epsilon/N$, this implies $\cJ'$ is positive everywhere. So it is a self-coupling of $\pi'$.
 \end{proof}

\noindent {\bf Claim 2}. There exists a bijection $\tau:\{1,\ldots, N-1\} \to \{2,\ldots, N\}$ such that if $\Phi_\tau:\{1,\ldots, N\} \to \cA \times \cA$ is the map $\Phi_\tau(i) = ( \phi(i), \phi(\tau(i)))$ and $\cJ_\tau= (\Phi_\tau)_*u_N$ is the empirical distribution of $\Phi_\tau$ then $\|\cJ_\tau-\cJ'\|_\infty \le 1/N$.
\begin{proof}[Proof of Claim 2]
Because $\cJ'$ is a self-coupling of $\pi'$ taking values in $\Z[1/N]$ there exist partitions $\cP=\{P_{a,b}\}_{a,b\in \cA},\cQ=\{Q_{a,b}\}_{a,b\in \cA}$ of $\{1,\ldots, N\}$ such that:
\begin{itemize}
\item $|P_{a,b}| = |Q_{a,b}|  = N \cJ'(a,b)$ for every $a,b \in \cA$;
\item $P_{a,b} \subset \phi^{-1}(a)$ and $Q_{a,b} \subset \phi^{-1}(b)$ for every $a,b \in \cA$.
\end{itemize}
Next we choose bijections $\beta_{a,b}:P_{a,b} \to Q_{a,b}$ for all $a,b \in \cA$. Define $\tau:\{1,\ldots, N-1\} \to \{2,\ldots, N\}$ by $\tau(i) = \beta_{a,b}(i)$ if $i\in P_{a,b}$ and $\beta_{a,b}(i) \ne 1$. If $i \in P_{a,b}$ (for some $a,b$) and $\beta_{a,b}(i)=1$ then we define $\tau(i)=N$. This satisfies the claim.
\end{proof}
Let $\tau:\{1,\ldots, N-1\} \to \{2,\ldots, N\}$ be a bijection satisfying the conclusion of Claim 2 with the property that the number of connected components of the graph $\Gamma(\tau)$ is as small as possible given that $\tau$ satisfies Claim 2. 

\noindent {\bf Claim 3}. $\Gamma(\tau)$ has at most $|\cA|^2$ connected components.

\begin{proof}[Proof of Claim 3]
To obtain a contradiction, suppose $\Gamma(\tau)$ has more than $|\cA|^2$ connected components. Then there exists $1\le i <j \le N-1$ such that $i$ and $j$ are in different connected components of $\Gamma(\tau)$, $\phi(i)=\phi(j)$ and $\phi(\tau(i))=\phi(\tau(j))$.  Let us define $\tau':\{1,\ldots, N-1\} \to \{2,\ldots, N\}$ by 
\begin{displaymath}
\tau'(k) = \left\{ \begin{array}{ll}
\tau(k) & k \notin \{i,j\} \\
\tau(j) & k=i \\
\tau(i) & k=j 
\end{array}\right.
\end{displaymath}
Observe that $\tau'$ also satisfies Claim 2 and $\Gamma(\tau')$ has one fewer connected component than $\Gamma(\tau)$ contradicting the choice of $\Gamma(\tau)$.
\end{proof}

Let $1\le i_1 < i_2 < \ldots < i_k \le N$ be a maximal set of indices such that for $t \ne s$, $i_t$ and $i_s$ are in different connected components of $\Gamma(\tau)$. Define the bijection $\sigma:\{1,\ldots,N-1\} \to \{2,\ldots, N\}$ by
\begin{displaymath}
\sigma(t) = \left\{ \begin{array}{ll}
\tau(t) & t \notin \{i_1,\ldots, i_k\} \\
\tau(i_{s+1}) & t=i_s~\textrm{ (indices mod $k$)}
\end{array}\right.
\end{displaymath}
Observe that $\Gamma(\sigma)$ is connected and, since $k\le |\cA|^2$,
$$    \left|\cJ_\sigma(a,b) - \cJ_\tau(a,b) \right|  = \left|\frac{ \#\{ 1\le i \le N-1:~ \phi(i) =  a, \phi(\sigma(i)) = b\} }{N} - \cJ_\tau(a,b) \right| \le |\cA|^2/N.$$
This implies the lemma.
%Now we define $w'$ by
%\begin{displaymath}
%w' x = \left\{ \begin{array}{ll}
%w^ix, & x \in P'_k, ~1\le k < m, ~i = \tau(k)-k \\
%wx, & x \in P'_m \cup P'_{m+1}
%\end{array}\right.
%\end{displaymath}
%Clearly $w'$ satisfies the requirements.
\end{proof}

\section{Proof of Theorem \ref{thm:main}}

From here on, it will be convenient to work with observables instead of partitions. So instead of considering partitions $\cP$ of a space $X$ we consider measurable maps $\phi:X \to \cA$ where $\cA$ is a finite set. Of course, $\{\phi^{-1}(\{a\}):~a\in \cA\}$ is the partition of $X$ represented by $\phi$.

\begin{lem}\label{lem:hard}
Let $(X,\mu)$ be a standard probability space and $T \in \Aut(X,\mu)$ be aperiodic. Let $\psi: X \to \cA$ be a measurable map into a finite set. Let $\mu = \int \nu~d\omega(\nu)$ be the ergodic decomposition of $\mu$ with respect to $T$ (so $\omega$ is a probability measure on the space of   $T$-invariant ergodic Borel probability measures on $X$). Suppose that for some $1/6>\epsilon>0$,
$$\omega\Big(\big\{ \nu:~  \|\psi_*\nu - \psi_*\mu \|_\infty > \epsilon \big\}\Big) < \epsilon.$$
Suppose also that $\cJ$ is a self-coupling of $\psi_*\mu$ (i.e. $\cJ$ is a probability measure on $\cA \times \cA$ whose projections are both equal to $\psi_*\mu$) and
$$\min_{a,b \in \cA} \cJ(a,b) >2|\cA|\epsilon.$$
Then there exists $T' \in \Aut(X,\mu)$ such that $T$ and $T'$ have the same orbits (a.e.) and if $\Phi: X \to \cA \times \cA$ is the map $\Phi(x)=(\psi(x),\psi(T'x))$ then 
$$ \| \Phi_* \mu  - \cJ\|_\infty \le 9|\cA|\epsilon.$$
\end{lem}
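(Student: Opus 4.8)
The plan is to chop almost all of $X$ into $T$-orbit segments of a large common length $N$, rearrange the $\psi$-labels inside each segment using Lemma \ref{lem:combinatorial}, and then glue the rearranged segments back together along each orbit to produce $T'$. Throughout, $\nu_x$ denotes the ergodic component of $x$, so that $\psi_*\mu = \int \psi_*\nu \, d\omega(\nu)$.

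First I would set up an equidistribution step. By the pointwise ergodic theorem, for $\mu$-a.e. $x$ the empirical distribution of $\psi$ along $x, Tx, \dots, T^{n-1}x$ converges to $\psi_*\nu_x$; so by Egorov's theorem, given a small $\delta>0$ (how small to be fixed at the end, in terms of $|\cA|$ and of the strictly positive slack $\min_{a,b}\cJ(a,b)-2|\cA|\epsilon$), there is a set $E\subseteq X$ with $\mu(X\setminus E)$ as small as desired and an integer $N_0$ such that for every $b\in E$ and every $n\ge N_0$ this empirical distribution lies within $\delta$ of $\psi_*\nu_b$. Put $Y=\{x:\|\psi_*\nu_x-\psi_*\mu\|_\infty\le\epsilon\}$; this set is $T$-invariant, and by hypothesis $\mu(X\setminus Y)<\epsilon$. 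Hence for $b\in E\cap Y$ and $n\ge N_0$ the $\psi$-empirical distribution of $b,Tb,\dots,T^{n-1}b$ is within $\epsilon+\delta$ of $\psi_*\mu$.

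Next I would invoke the Rohlin lemma for the aperiodic transformation $T$ to obtain a tower of height $N$ (with $N$ a large multiple of $N_0$) and tiny error, and then \emph{re-base} each column so that it starts at a point of $E$, discarding the initial portion that lies outside $E$. These discarded portions have small total measure by a Markov-type estimate applied to $\mu(X\setminus E)$, so one is left with pairwise disjoint $T$-orbit segments, of lengths between $(1-\epsilon)N$ and $N$, each beginning in $E$, whose union has measure $>1-O(\epsilon)$. Call a segment \emph{good} if its base point lies in $E\cap Y$; since $Y$ is $T$-invariant, the non-good segments are contained in $X\setminus Y$ and so carry at most $\mu(X\setminus Y)<\epsilon$ of the space. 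On each good segment the $\psi$-empirical distribution is within $\epsilon+\delta$ of $\psi_*\mu$, so Lemma \ref{lem:combinatorial}, applied with $\pi=\psi_*\mu$, the given self-coupling $\cJ$, parameter $\epsilon+\delta$, and $N$ replaced by the segment length (its hypotheses being met once $\delta$ and $1/N$ are small relative to the slack), yields a bijection $\sigma$ of the segment whose graph is a Hamiltonian path from the first vertex to the last and whose empirical two-point distribution $\cJ_\sigma$ satisfies $\|\cJ_\sigma-\cJ\|_\infty<2|\cA|(\epsilon+\delta)+3|\cA|^2/((1-\epsilon)N)$.

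Finally I would define $T'$ to follow the Hamiltonian path given by $\sigma$ inside each good segment, and to coincide with $T$ everywhere else (at the last vertex of a good segment, on the discarded pieces, on the non-good segments, and on the original tower error). Because $T$ is aperiodic every orbit is a copy of $\Z$, and a direct check shows $T'$ is then a measure-preserving bijection restricting to a single transitive permutation of each $T$-orbit, so $T'$ has the same orbits as $T$. For the estimate of $\|\Phi_*\mu-\cJ\|_\infty$ one splits $X$ into: points that are interior sources of good segments (these contribute the convex combination of the $\cJ_\sigma$'s, hence stay within $2|\cA|(\epsilon+\delta)+3|\cA|^2/((1-\epsilon)N)$ of $\cJ$); last vertices of good segments (total measure $O(1/N)$); and the exceptional set (total measure $O(\epsilon)$). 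Choosing the approximation parameters small and $N$ large, a routine accounting makes these sum to at most $9|\cA|\epsilon$. The step I expect to be the main obstacle is the second one: guaranteeing that almost every segment of the partition is $\psi$-equidistributed. The subtlety is that the set where the length-$N$ averages have not yet converged, although of small $\mu$-measure, could a priori be concentrated near the base of an arbitrary Rohlin tower; this is exactly why one must re-base the tower inside the Egorov set $E$, and it is the $T$-invariance of $Y$ that then disposes of the genuinely bad (wrong-component) segments.
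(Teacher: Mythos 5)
Your argument is correct and has the same skeleton as the paper's proof (chop orbits into long segments, rearrange each segment via Lemma \ref{lem:combinatorial}, glue back), but you implement the chopping in a genuinely different way. The paper does not use a Rohlin tower: it picks a complete section $Y\subset X$ of measure $\le\epsilon/(2M+1)$ and takes the segments to be the Kac return-time intervals $\{T^{-\alpha(x)}x,\ldots,T^{\beta(x)}x\}$ between consecutive visits to $Y$. Equidistribution of a segment is then certified by a \emph{two-sided} ergodic-theorem set $X'$ of measure $>1-\epsilon$, on which every window $[-K_1,K_2]$ with $K_1,K_2\ge M$ has $\psi$-statistics $\epsilon$-close to $\psi_*\mu$; a point $x\in X'$ with $\alpha(x),\beta(x)\ge M$ then automatically vouches for its whole segment, so there is no analogue of your re-basing step, and Kac's formula controls the measure of short segments. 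Your Rohlin-plus-re-basing route works too, and your diagnosis of the one real subtlety (that bad windows could cluster near the base of an arbitrary tower, and that the cure is to re-base inside the Egorov set and exploit $T$-invariance of $Y=\{x:\|\psi_*\nu_x-\psi_*\mu\|_\infty\le\epsilon\}$) is exactly on target. Two small points worth tightening: (i) the claim that re-based segments have length $\ge(1-\epsilon)N$ is not automatic --- columns with $j(x)>\epsilon N$ must be discarded outright, but each such column contributes $>\epsilon N$ points to $X\setminus E$, so taking $\mu(X\setminus E)<\epsilon^2$ bounds their total measure by $\epsilon$; you should say this. (ii) Your prescription for $T'$ --- follow the Hamiltonian path given by $\sigma$ from the first to the last vertex of each good segment, and agree with $T$ elsewhere --- is actually cleaner on the bijectivity question than the paper's (which sets $T'y=Ty$ for $y$ in the section, silently presuming that the path out of the first vertex is the $T$-step), so no objection there. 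As for what each route buys: the Kac decomposition is self-tiling and the two-sided ergodic theorem makes the ``good segment'' condition a pointwise condition at interior points, so the paper's bookkeeping is somewhat shorter; your Rohlin version is closer to standard tower machinery and makes the dependence of the parameters on the slack $\min_{a,b}\cJ(a,b)-2|\cA|\epsilon$ more explicit.
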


\begin{proof}
%After replacing $\cA$ by the essential range of $\psi$ if necessary, we may assume that $\psi_*\mu(a) >0$ for every $a\in \cA$. Of course, there exists a self-coupling $\cJ'$ of $\psi_*\mu$ with $\cJ'(a,b)>0$ for all $a,b \in \cA$. For example, the product measure $\psi_*\mu \times \psi_*\mu$ has this property. Therefore there exists a self-coupling $\cJ''$ with $\|\cJ'' - \cJ\|_\infty< \epsilon$ such that $\cJ''(a,b) >0$ for all $a,b \in \cA$. For example, we may choose $\cJ'' = \epsilon \cJ' + (1-\epsilon)\cJ$. So after replacing $\cJ$ with $\cJ''$ if necessary we may assume that $\cJ(a,b)>0$ for every $a,b \in \cA$.

%If necessary we choose $\epsilon$ smaller so that 
%$$\min_{a,b \in \cA} \cJ(a,b) >2|\cA|\epsilon.$$

By the pointwise ergodic theorem, there exists a Borel set $X' \subset X$ and an integer $M>0$ such that
\begin{itemize}
\item $\mu(X')>1-\epsilon$
\item for every $x\in X'$, every $a\in \cA$ and every $K_1,K_2 \ge M$,
$$\left| \frac{ \#\{ -K_1 \le j \le K_2:~ \psi(T^j x) = a \} }{K_1+K_2+1} - \psi_*\mu(a) \right| < \epsilon.$$
\end{itemize}
Without loss of generality, we may assume $M$ is large enough so that
$$\min_{a,b \in \cA} \cJ(a,b) >2 |\cA|\epsilon + |\cA|^2/(2M+1)$$
and $3|\cA|^2/(2M+1) < \epsilon$. 

%$$2|\cA|^2/(2M+1) < \epsilon.$$

%By Lemma \ref{lem:section} there exists a measurable set $Y \subset X$ such that $\nu(Y)=\epsilon/(2M+1)$ for $\omega$-a.e. $\nu$.

Let $Y \subset X$ be a complete section with $\mu(Y)\le \epsilon/(2M+1)$. By a complete section we mean that for $\mu$-a.e. $x\in X$ the orbit of $x$ intersects $Y$ nontrivially. The existence of such a complete section is proved in \cite[Chapter II, Lemma 6.7]{KM04}. Without loss of generality, we will assume that if $y\in Y$ then $Ty \notin Y$.

For any integer $N\ge 1$ let us say that a map $\eta:\{1,\ldots, N\} \to \cA$ is {\em $\epsilon$-good} if 
$$\| \eta_*u_N - \psi_*\mu \|_\infty < \epsilon$$
and 
$$\min_{a,b \in \cA} \cJ(a,b) >2 |\cA|\epsilon + |\cA|^2/N$$
where $u_N$ is the uniform probability measure on $\{1,\ldots, N\}$. For each  $\epsilon$-good map $\eta:\{1,\ldots,N\} \to \cA$ choose a map $\sigma_\eta:\{1,\ldots, N-1\} \to \{2,\ldots, N\}$ as in Lemma \ref{lem:combinatorial}.

For $x\in X$, let $\alpha(x)$ be the smallest nonnegative integer such that $T^{-\alpha(x)}x \in Y$, let $\beta(x)$ be the smallest nonnegative integer such that $T^{\beta(x)}x \in Y$ and let $\psi_x:\{1,\ldots, \alpha(x) + \beta(x)+1\} \to \cA$ be the map $\psi_x(j) = \psi(T^{j-\alpha(x)-1} x)$. So
$$\psi_x = \big(\psi(T^{-\alpha(x)}x),  \psi(T^{-\alpha(x)+1}x), \ldots, \psi(T^{\beta(x)}x) \big).$$
Note that $\psi_x$ is $\epsilon$-good if $x \in X'$, $\alpha(x) \ge M$ and $\beta(x) \ge M$. In this case, let $\sigma_x=\sigma_{\psi_x}$ and $\cJ_x=\cJ_{\sigma_x}$ (with notation as in Lemma \ref{lem:combinatorial}). 

Now we can define $T'$ as follows. If either $x \in Y$ or $\psi_x$ is not $\epsilon$-good then we define $T'x=Tx$. Otherwise we set $T'x = T^{\sigma_x(\alpha(x)+1)-\alpha(x)-1}(x)$.  Because each $\sigma_x$ is a bijection and the graph $\Gamma(\sigma_x)$ is connected it immediately follows that $T'$ and $T$ have the same orbits. 

%Let $X_-$ be the set of all $x\in X$ such that $\alpha(x) \ge M$. 
By Kac's Theorem (see for example \cite[Theorem 4.3.4]{Do11}),
$$\int_Y \beta(Tx)+1~d\mu(x) = 1.$$%\frac{1}{\mu(Y)} = \frac{1}{\epsilon}.$$
Therefore the set $Y'$ of all $x \in Y$ such that $\beta(Tx)+1 \ge 2M+1 $ satisfies
$$\int_{Y'} \beta(Tx)+1~d\mu(x) = 1 - \int_{Y\setminus Y'} \beta(Tx)+1~d\mu(x) \ge 1 - \mu(Y)(2M+1) \ge 1-\epsilon.$$
Let $X''$ be the set of all $T^jx$ for $x \in Y'$ and $M \le j \le \beta(Tx)-M$. Then 
$$\mu(X'') = \int_{Y'}( \beta(Tx) -2M +1) ~d\mu(x) \ge 1-\epsilon - \mu(Y')(2M) \ge 1-2\epsilon.$$
Let $X''' = X' \cap X''$. So $\mu(X''') \ge 1-3\epsilon$. Observe that if $x\in X'''$ then $\alpha(x)\ge M$ and $\beta(x)\ge M$ so $\psi_x$ is $\epsilon$-good. Finally, let $Y''$ be the set of all $y\in Y'$ such that $T^jy \in X'''$ for some $1\le j \le \beta(Tx)+1$. Then
$$\int_{Y''} \beta(Tx)+1~d\mu(x) \ge \mu(X''') \ge 1-3\epsilon.$$
Moreover, if $y \in Y''$ then $\psi_{Ty}$ is $\epsilon$-good (this uses our hypothesis that if $y\in Y'' \subset Y$ then $Ty \notin Y$) . Let $Z$ be the set of all $T^jy$ for $y \in Y''$ and $0\le j \le \beta(Ty)$. So
$$\mu(Z) = \int_{Y''} \beta(Tx)+1 ~d\mu(x) \ge 1-3\epsilon.$$

Recall that $\Phi:X \to \cA \times \cA$ is defined by $\Phi(x) = (\psi(x), \psi(T'x))$. Let $\mu_{Z}$ denote the unnormalized restriction of $\mu$ to $Z$. Then
\begin{eqnarray*}
\| \Phi_*\mu  - \cJ \|_\infty &\le& \| \Phi_*\mu - \Phi_* \mu_{Z} \|_\infty + \|\Phi_*\mu_{Z} - \cJ \|_\infty\\
&\le& 3\epsilon + \|\Phi_*\mu_{Z} - \cJ \|_\infty.
\end{eqnarray*}
Next observe that 
$$\Phi_*\mu_Z = \int_{Y''} [\beta(Tx)+1] \cJ_{Tx}~d\mu(x).$$
By Lemma \ref{lem:combinatorial}, for $x\in Y''$
$$\|\cJ_{Tx} - \cJ\|_\infty \le 2|\cA|\epsilon + 3|\cA|^2/(2M+1) \le 3|\cA|\epsilon.$$
% \epsilon + 2|\cA|^2/(2M+1) \le 2\epsilon$ for every $x\in Y''$, 
 %we have
So
$$ \|\Phi_*\mu_{Z} - \cJ \|_\infty \le (3|\cA|\epsilon)/(1-3\epsilon) \le 6|\cA|\epsilon$$
(because $\epsilon<1/6$) and therefore,
$$\| \Phi_*\mu  - \cJ \|_\infty  \le 3\epsilon + 6|\cA|\epsilon \le 9|\cA|\epsilon.$$

\end{proof}

In the next lemma, we prove the existence of a ``good observable'' $\psi$.

%This is an observable such that the pushforward measures $\psi_*\nu$ on $\cA$ are all about the 

 %Namely,  if $\mu = \int \nu ~d\omega(\nu)$ is the ergodic decomposition of $\mu$ then for most of the measures $\nu$, $\nu(\{x\in X:~\psi(x)=a\})$ does not vary much with respect to $\nu$. 

\begin{lem}\label{lem:good-partition}
Let $G$ be a countable group and $S \subset G$ a finite set of elements of infinite order. Let $T\in A(G,X,\mu)$ be an essentially free action of $G$. For each $s \in S$ let $\mu = \int \nu ~d\omega_s(\nu)$ be the ergodic decomposition of $\mu$ with respect to $T_s$ (so $\omega_s$ is a Borel probability measure on the space of all $T_s$-invariant ergodic Borel probability measures). 

Let $\pi$ be a probability measure on a finite set $\cA$. Also let $0<\epsilon<1/2$. Then there exists a measurable map $\psi:X \to \cA$ such that for every $s \in S$,
$$\omega_s(\{ \nu:~ \|\psi_*\nu - \pi \|_\infty > 3\epsilon \}) < \epsilon.$$
\end{lem}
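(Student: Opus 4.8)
The plan is to build $\psi$ by choosing a sufficiently large finite partition at random, using a Rohlin-tower / approximate-fundamental-domain argument for each $T_s$ to control the empirical distribution of $\psi$ along $T_s$-orbits, and then applying an ergodic-decomposition averaging argument together with a concentration (second moment or large deviation) estimate to get the uniform-in-$\nu$ conclusion. The key point is that, by the pointwise ergodic theorem applied to $T_s$, for $\omega_s$-a.e.\ ergodic component $\nu$ the empirical distribution of $\psi$ along long $T_s$-orbit segments converges to $\psi_*\nu$; so it suffices to arrange that $\psi_*\nu$ is within $3\epsilon$ of $\pi$ for all but an $\epsilon$-fraction of components $\nu$, simultaneously for all finitely many $s\in S$.

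First I would fix, for each $s\in S$, a Rohlin tower for $T_s$: since $T_s$ is aperiodic (each $s$ has infinite order and the action is essentially free), for any large $n$ and small $\delta$ there is a measurable $B_s\subset X$ with $B_s,T_sB_s,\dots,T_s^{n-1}B_s$ disjoint and $\mu\big(X\setminus\bigsqcup_{j<n}T_s^jB_s\big)<\delta$. The tower is a "rigid" structure along which I can prescribe the values of an observable column-by-column. Then I would define $\psi$ on each column of each tower so that in every column the values $(\psi(x),\psi(T_sx),\dots)$ have empirical frequency vector within, say, $\epsilon$ of $\pi$; this forces the orbit-averages $\frac1N\sum_{j<N}\mathbf{1}[\psi(T_s^jx)=a]$ to be within roughly $\epsilon+O(\delta)+O(n^{-1})$ of $\pi(a)$ for typical $x$ and all large $N$. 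Invoking the pointwise ergodic theorem for $T_s$, this limit equals $\psi_*\nu(a)$ on the component $\nu$ through $x$, so $\|\psi_*\nu-\pi\|_\infty<2\epsilon$ (say) for all $\nu$ outside a set of $\omega_s$-measure $<\epsilon/|S|$; unioning over $s\in S$ gives the claim with room to spare (the statement allows $3\epsilon$ and $\epsilon$, so the constants are generous).

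The genuine obstacle is that the finitely many transformations $T_s$, $s\in S$, have \emph{different} and generally incompatible ergodic decompositions and different Rohlin towers, so a single $\psi$ must simultaneously be "well-distributed" along all of them. I expect to handle this by building $\psi$ in stages --- refine the partition one $s$ at a time, each time only slightly perturbing $\psi$ on a set of small measure (the part of $X$ not deep inside the current tower), and checking that perturbations of measure $\ll\epsilon$ change each $\psi_*\nu$ by $\ll\epsilon$ in $\ell^\infty$ on all but an $\ll\epsilon$-fraction of components (this last step itself uses that $\nu\mapsto \nu(E)$ integrates to $\mu(E)$, so a Markov-inequality bound controls the measure of components on which $\nu(E)$ is large). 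Alternatively, and perhaps more cleanly, one can take $\psi$ to be a "random" labeling: let $\psi$ be i.i.d.\ $\pi$-distributed labels on the atoms of a fine partition refining suitable tower structures for all $s$, and show that with positive probability the required estimates hold for every $s\in S$ by a direct large-deviation bound along orbits combined with Fubini in the ergodic decomposition. Either way the heart of the matter is the simultaneous control over the several ergodic decompositions, and the tower construction plus the ergodic theorem is what makes it go through.
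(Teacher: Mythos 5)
Your diagnosis of the real difficulty --- that the finitely many $T_s$ have incompatible ergodic decompositions and incompatible Rohlin towers, so a single $\psi$ must be equidistributed along all of them at once --- is exactly right, and your Markov-inequality step (pass from $\mu(Z)$ small to ``$\nu(Z)$ small for all but an $\epsilon$-fraction of components $\nu$'') is the same averaging trick the paper uses. But the two ways you offer to resolve the simultaneity problem are not on an equal footing, and the one you describe in most detail is the shakier one.

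Your option (a), iterating over $s\in S$ and re-adjusting $\psi$ one tower at a time, is not a proof as stated. Adjusting $\psi$ to equidistribute along columns of a $T_{s_2}$-tower is a global change (columns of the $s_2$-tower cut across the $s_1$-tower arbitrarily), not a perturbation on a set of measure $\ll\epsilon$; there is no a priori reason the fix for $s_2$ leaves the $s_1$-statistics intact, and the errors do not obviously telescope over $|S|$ passes. You would have to organize the towers so that the later adjustments are genuinely small relative to the earlier ones, and that is the entire content of the lemma, not a routine detail.

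Your option (b), i.i.d.\ $\pi$-labels plus a large-deviation bound along orbit segments for each $s$, is morally the right thing and would work, but what it amounts to is re-proving (a finite-window case of) the Ab\'ert--Weiss theorem that every essentially free action weakly contains the Bernoulli shift $G\curvearrowright(\cA,\pi)^G$. The paper simply cites Ab\'ert--Weiss: weak containment of the Bernoulli shift gives, for any $N$, a single $\psi$ with $(\psi_{s,N})_*\mu$ close to $\pi^N$ in $\ell^\infty$ \emph{simultaneously for all $s\in S$}; then the law of large numbers for $\pi^N$ plus your Markov step finishes it (and in fact no pointwise ergodic theorem is needed --- one only needs $\psi_*\nu=E_*(\psi_{s,N})_*\nu$, which follows from $T_s$-invariance of $\nu$, not ergodicity). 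So the gap in your write-up is not conceptual but structural: the simultaneous control you flag as ``the heart of the matter'' is precisely what Ab\'ert--Weiss packages for you, and invoking it turns your sketch (b) into a short proof instead of a project.
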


\begin{proof}
Given a measurable map $\psi:X \to \cA$, an element $s\in S$ and an integer $N\ge 1$ let $\psi_{s,N}:X \to \cA^N$ be the map
$$\psi_{s,N}(x) = (\psi(T_s x), \psi(T^2_s x), \ldots, \psi(T^N_s x) ).$$
 According to Abert-Weiss \cite{AW11}, the action $T$ weakly contains the Bernoulli shift action $G \cc (\cA,\pi)^G$. This immediately implies that for any integer $N\ge 1$ there exists a measurable map $\psi:X \to \cA$ such that for every $s \in S$
 $$\| (\psi_{s,N})_*\mu - \pi^N \|_\infty < \frac{\epsilon^2}{2}.$$
 Given a sequence $y \in \cA^N$, let $E[y]$ denote its empirical distribution. To be precise, $E[y]$ is the probability measure on $\cA$ defined by
 $$E[y](a) = \#\{1\le i \le N:~ y(i) = a\}/N.$$
 By the law of large numbers we may choose $N$ large enough so that
 $$\pi^N(\{ y \in \cA^N:~ \| E[y] - \pi \|_\infty > \epsilon\}) < \epsilon^2/2.$$
 Therefore,
$$\mu\left(\left\{x \in X:~ \| E[\psi_{s,N}(x)] - \pi \|_\infty > \epsilon\right\}\right) = (\psi_{s,N})_*\mu(\{ y \in \cA^N:~ \| E[y] - \pi \|_\infty > \epsilon\})< \epsilon^2$$
for every $s\in S$. Let $Z=\left\{x \in X:~ \| E[\psi_{s,N}(x)] - \pi \|_\infty > \epsilon\right\}$. So 
$$\int \nu(Z)~d\omega_s(\nu) = \mu(Z)<\epsilon^2.$$
This implies $\omega_s(\{\nu:~ \nu(Z)>\epsilon\})<\epsilon$. 

Next we claim that if a probability measure $\nu$ satisfies $\nu(Z) \le \epsilon$ then  $\| \psi_*\nu - \pi \|_\infty \le 3\epsilon$. Indeed, 
$$\psi_*\nu = E_* (\psi_{s,N})_*\nu.$$
So if $\nu(Z)\le \epsilon$ then $\|\psi_* \nu - E_*(\psi_{s,N})_*(\nu\upharpoonright Z^c) \|_\infty \le \epsilon$ (where $Z^c = X \setminus Z$ is the complement of $Z$) and $\| \pi - E_*(\psi_{s,N})_*(\nu\upharpoonright Z^c) \|_\infty \le \frac{\epsilon}{1-\epsilon}$ by definition of $Z$. So $\| \psi_* \nu - \pi\|_\infty \le 3\epsilon$ (since we assume $\epsilon < 1/2$). 

Since $\omega_s(\{\nu:~ \nu(Z)>\epsilon\})<\epsilon$, we now have
$$\omega_s(\{\nu:~ \|\psi_*\nu - \pi\|_\infty > 3\epsilon\}) < \epsilon.$$

% then it must also $\nu(Z)>\epsilon$. 

%satisfy $\| E[\psi_{s,N}]-\pi \|_\infty > \epsilon
%$$\omega_s\left(\left\{ \nu:~ \nu\left(\left\{x \in X:~  \| E[\psi_{s,N}(x)] - \pi \|_\infty > \epsilon \right\} \right) > \epsilon\right\} \right) < \epsilon.$$
%Intregrating over $x$ with respect to $\nu$, we obtain the lemma.
\end{proof}

%\begin{lem}\label{lem:section}
%Let $T \in \Aut(X,\mu)$ be aperiodic and let $\mu = \int \nu~d\omega(\nu)$ be the ergodic decomposition of $\mu$ with respect to $T$. Then for any $t\in [0,1]$ there exists a measurable set $Y \subset X$ such that $\nu(Y)=t$ for $\omega$-a.e. $\nu$.
%\end{lem}

%\begin{proof}
%Let $P(X)$ be the set of all Borel probability measures on $X$. For any measurable set $Y \subset X$ there exists a map $f_Y:P(X) \to \R$ given by $f_Y(\nu) = \nu(Y)$. We endow $P(X)$ with the smallest sigma-algebra under which each $f_Y$ is measurable. Let $P^e(X) \subset P(X)$ be the subset of ergodic measures with the induced sigma-algebra. By the Ergodic Decomposition Theorem there exists a Borel map $E:X \to P^e(X)$ such that for every $\nu \in P(X)$,
%$$\nu = \int E(x)~d\nu(x).$$
%See \cite{} for example. 

%Because $X$ is a standard Borel space, there exists a Borel isomorphism $J:X \to [0,1]$. Define $F: [0,1] \times P(X) \to [0,1]$ by $F(t,\nu) = \nu(J^{-1}([0,t]))$. Observe that $F$ is Borel. Finally, let $Y = \{x\in X:~ F(J(x), E(x)) \le t\}$. Because $F,J,E$ are all Borel, $Y$ is also Borel. By definition, $\nu(Y) = t$ for every $\nu \in P^e(X)$.  %Therefore, $\nu(Y)=t$ for every $\nu \in P(X)$.
%\end{proof}

%We can now prove the main theorem.

\begin{proof}[Proof of Theorem \ref{thm:main}]
Let $G$ be a finitely generated free group with free generating set $S \subset G$. Let $a,b\in A(G,X,\mu)$ and assume $a$ is essentially free.  It suffices to show that $b\in \overline{[a]_{OE}}$. By Lemma \ref{lem:generator} it suffices to show that for every finite set $\cA$, measurable map $\phi:X \to \cA$ and $\epsilon>0$ there exists a measurable map $\psi:X \to \cA$ and $a'\in [a]_{OE}$ such that 
$$\| (\psi \vee \psi \circ a'_s)_*\mu - (\phi \vee \phi \circ b_s)_*\mu \|_\infty \le 10|\cA|\epsilon\quad \forall s\in S$$ 
where, for example, $\phi \vee \phi \circ b_s:X \to \cA \times \cA$ is the map
$$\phi\vee \phi \circ b_s(x) = (\phi(x), \phi(b_sx)).$$

After replacing $\cA$ with the essential range of $\phi$ is necessary, we may assume that $\phi_*\mu(c)>0$ for every $c\in \cA$. We claim that there exists a self-coupling $\cJ_s$ of $\phi_*\mu$ such that $\cJ_s(c,d) >0$ for all $c,d \in \cA$ and
$$\|(\phi \vee \phi \circ b_s)_*\mu  - \cJ_s\|_\infty < \epsilon.$$
Indeed, the self-coupling
$$\cJ_s = (1-\epsilon) (\phi \vee \phi \circ b_s)_*\mu + \epsilon (\phi \times \phi)_*(\mu \times \mu)$$
has this property. After choosing $\epsilon$ smaller if necessary we may assume that $\epsilon<1/6$ and 
$$\min_{s\in S} \min_{c,d \in \cA} \cJ_s(c,d) > 2|\cA|\epsilon.$$

Let $\mu=\int \nu~d\omega_s$ be the ergodic decomposition of $\mu$ with respect to $a_s$. By Lemma \ref{lem:good-partition} there exists a measurable map $\psi:X \to \cA$ such that
$$\omega_s(\{\nu:~\| \phi_*\mu - \psi_* \nu\|_\infty > \epsilon\})< \epsilon$$
for every $s\in S$. By Lemma \ref{lem:hard} for every $s\in S$ there exists $a'_s \in \Aut(X,\mu)$ such that $a'_s$ and $a_s$ have the same orbits and
$$\| (\psi \vee \psi \circ a'_s)_*\mu - \cJ_s \|_\infty \le 9|\cA|\epsilon.$$ 
Because $a'_s$ and $a_s$ have the same orbits for every $s\in S$ it follows that the homomorphism $a':G \to \Aut(X,\mu)$ defined by $\{a'_s\}_{s\in S}$ is orbit-equivalent to $a$. In other words, $a' \in [a]_{OE}$. Also
$$\| (\psi \vee \psi \circ a'_s)_*\mu - (\phi \vee \phi \circ b_s)_*\mu\|_\infty \le \|(\psi \vee \psi \circ a'_s)_*\mu - \cJ_s \|_\infty  + \|\cJ_s - (\phi \vee \phi \circ b_s)_*\mu\|_\infty \le 10|\cA|\epsilon.$$ 
This proves the special case in which $G$ is finitely generated. The general case follows from Corollary \ref{cor:fin-gen}.
\end{proof}

{\small
 }


\begin{thebibliography}{10000000}
\bibitem[AE11]{AE11} M. Ab\'{e}rt and G. Elek, \textit{Dynamical properties of profinite actions}. Ergodic Theory and Dynamical Systems, 32 (2012) 1805--1835.

%\bibitem[AP08]{AP08} Alpern, S., Prasad, V. S. \textit{Multitowers, conjugates and codes: three theorems in ergodic theory, one variation on RokhlinÕs lemma}.  Proc. Amer. Math. Soc. 136 (2008), no. 12, 4373--4383.

\bibitem[AW11]{AW11} M. Ab\'{e}rt and B. Weiss, \textit{Bernoulli actions are weakly contained in any free action}. arXiv:1103.1063v1.

%\bibitem[BdlHV08]{BdlHV08} B. Bekka, P. de la Harpe, A. Valette, \textit{Kazhdan's property (T)}, New Mathematical Monographs, 11. Cambridge University Press, Cambridge, 2008.

\bibitem[BLS13]{BLS13} I. Benjamini, R. Lyons and O. Schramm, \textit{Unimodular random trees}. To appear in Ergodic Theory Dynamical Systems. 

\bibitem[Bo03]{Bo03} L. Bowen, \textit{Periodicity and packings of the hyperbolic plane}. Geom. Dedicata, 102 (2003), 213--236.

% Sphere Packings in Hyperbolic Space: Periodicity and Continuity. math.MG/0311411 

%\bibitem[CFW81]{CFW81} A. Connes, J. Feldman and B. Weiss. \textit{An amenable equivalence relation is generated by a single transformation}. Ergodic Theory Dynamical Systems 1 (1981), no. 4, 431--450 (1982).

\bibitem[CKT12]{CKT12} C. Conley, A. Kechris, and R. Tucker-Drob. Ultraproducts of measure preserving actions and graph combinatorics, Erg. Theory and Dynam. Systems, to appear.

\bibitem[Do11]{Do11} T. Downarowicz, \textit{Entropy in dynamical systems}. 
Cambridge University Press, Cambridge, 2011. 

\bibitem[Dy59]{Dy59} H. A. Dye, \textit{ On groups of measure preserving transformation}. I.  Amer. J. Math.  81  (1959), 119--159.

\bibitem[Dy63]{Dy63} H. A. Dye, \textit{ On groups of measure preserving transformations. II}. Amer. J. Math. 85 (1963), 551--576. 

\bibitem[EL10]{EL10} G. Elek, G. Lippner, \textit{Sofic equivalence relations}. Journal of Func. Anal. 258 (2010) no. 5. 1692--1708.


\bibitem[Ep09]{Ep09} I. Epstein, \textit{Orbit inequivalent actions of non-amenable groups}. arXiv:0707.4215

\bibitem[FW04]{FW04} M. Foreman and B. Weiss, \textit{An anti-classification theorem for ergodic measure preserving transformations}, J. Eur. Math. Soc. {\bf 6} (2004), 277--292.


\bibitem[Ga00]{Ga00}  D. Gaboriau, \textit{Cout des relations d'equivalence et des groupes}. Inv. Math., 139 (2000), 41--98.

\bibitem[GP05]{GP05}  D. Gaboriau and S. Popa, \textit{An uncountable family of nonorbit equivalent actions of $\F_n$}. J. Amer. Math. Soc., 18(3) (2005), 547--559.

\bibitem[GTW06]{GTW06} E. Glasner, J.-P. Thouvenot and B. Weiss, \textit{Every countable group has the weak Rohlin property}. Bull. London Math. Soc, 38(6) (2006), 932--936

%\bibitem[Ha56]{Ha56} P. Halmos, \textit{Lectures on Ergodic Theory}, Chelsea Publishing Co., New York (1956).

\bibitem[Hj05]{Hj05}  G. Hjorth, \textit{A converse to DyeÕs theorem}. Trans. Amer. Math. Soc., 
357(8) (2005), 3083--3103.

\bibitem[IKT09]{IKT09} A. Ioana, A.S. Kechris, and T. Tsankov, \textit{Subequivalence relations and
positive-definite functions}. Groups, Geom. and Dynam. 3 (2009), no. 4,
579Ð625.

%\bibitem[Io08]{Io08} A. Ioana, \textit{Orbit inequivalent actions for groups containing a copy of $\F _2$}, arXiv:math/0701027v2



\bibitem[Io09]{Io09} A. Ioana, \textit{Non-orbit equivalent actions of $\F _n$}. Ann. Sci. \'{E}c. Norm. Sup\'{e}r. (4) 42 (2009), no. 4, 675--696.  



\bibitem[KT08]{KT08} A. Kechris and T. Tsankov, \textit{Amenable actions and almost invariant sets}. Proc. Amer. Math. Soc. 136 (2008), no. 2, 687--697.

\bibitem[Ke10]{Ke10} A. Kechris, \textit{Global aspects of ergodic group actions}. Mathematical Surveys and Monographs, 160. American Mathematical Society, Providence, RI, 2010. xii+237 
   
\bibitem[Ke11]{Ke11} A. Kechris, \textit{Weak containment in the space of actions of a free group}. Israel J. Math., to appear.

 \bibitem[Ki08]{Ki08} Y. Kida, \textit{Orbit equivalence rigidity for ergodic actions of the mapping class group}.  Geom. Dedicata  131  (2008), 99--109.

 
\bibitem[KM04]{KM04} A. S. Kechris and B. D. Miller, \textit{Topics in Orbit Equivalence}. Lecture Notes in Mathematics, Springer, Vol. 1852, 2004


 
%\bibitem[LS04]{LS04} A. Lubotzky and Y. Shalom, \textit{Finite representations in the unitary  dual and Ramanujan groups}, Contemp. Math., 347 (2004), 173--189. 


%\bibitem[LZ03]{LZ03} A. Lubotzky and A. Zuk, \textit{On property $(\tau)$}, preprint, 2003. (posted in http://www.ma.huji.ac.il/\%alexlub/, 2003)

\bibitem[MS06]{MS06} N. Monod and Y. Shalom, \textit{Orbit equivalence rigidity and bounded cohomology}.  Ann. of Math. (2)  164  (2006),  no. 3, 825--878.


%\bibitem[Ol85]{Ol85} J. M. Ollagnier, \textit{Ergodic theory and statistical mechanics}. Lecture Notes in Mathematics, 1115. Springer-Verlag, Berlin, 1985. vi+147 pp.

\bibitem[OW80]{OW80} D. S. Ornstein and B. Weiss, \textit{Ergodic theory of amenable group actions. I. The Rohlin lemma}. Bull. Amer. Math. Soc. (N.S.) 2 (1980), no. 1, 161--164.

\bibitem[Po06]{Po06} S. Popa, \textit{Strong rigidity of $\rm II\sb 1$ factors arising from malleable actions of $w$-rigid groups. II.}  Invent. Math.  165  (2006),  no. 2, 409--451. 

%\bibitem[Po07]{Po07} S. Popa. \textit{Deformation and rigidity for group actions and von Neumann algebras.}  International Congress of Mathematicians. Vol. I,  445--477, Eur. Math. Soc., Z\"urich, 2007.

\bibitem[Po08]{Po08} S. Popa, \textit{On the superrigidity of malleable actions with spectral gap}. J. Amer. Math. Soc.  21  (2008),  no. 4, 981--1000.

\bibitem[Sc81]{Sc81} K. Schmidt, \textit{Amenability, KazhdanÕs property T, strong ergodicity and invariant
means for ergodic group actions}. Erg. Th. \& Dynam. Sys., 1 (1981),
223--236.

%\bibitem[We01]{We01} B. Weiss, \textit{Monotileable amenable groups}, in Topology, Ergodic Theory, Real Algebraic Geometry, 257--262. American Mathematical Society Translations, Ser. 2, Vol. 202, American Mathematical Society, Providence, RI, 2001.

% \bibitem[Zi84]{Zi84} R. J. Zimmer. \textit{Ergodic theory and semisimple groups}. Monographs in Mathematics, 81. BirkhŠuser Verlag, Basel, 1984. 
\end{thebibliography}
 \end{document}